\newcommand{\pFq}[5]{\ensuremath{{}_{#1}F_{#2} \left( \genfrac{}{}{0pt}{}{#3}
{#4} \bigg| {#5} \right)}}
\renewcommand{\Re}{\operatorname{Re}}
\newcommand{\G}{\Gamma}
\renewcommand{\(}{\left\(}
\renewcommand{\)}{\right\)}
\renewcommand{\[}{\left\[}
\renewcommand{\]}{\right\]}
\newcommand{\Z}{\mathbb{Z}}
\newcommand{\K}{\mathbb{K}}
\newcommand{\Q}{\mathbb{Q}}
\newcommand{\C}{\mathbb{C}}
\newcommand{\R}{\mathbb{R}}
\let\dotlessi=\i
\renewcommand{\i}{\infty}
\numberwithin{equation}{section}
 \theoremstyle{plain}
\newtheorem{theorem}{Theorem}[section]
\newtheorem{lemma}[theorem]{Lemma}
\newtheorem{corollary}[theorem]{Corollary}
\newtheorem{proposition}[theorem]{Proposition}
\def\proof{\@ifnextchar[{\@oproof}{\@nproof}}
\def\@oproof[#1][#2]{\trivlist\item[\hskip\labelsep\textit{#2 Proof of\
#1.}~]\ignorespaces}
\def\@nproof{\trivlist\item[\hskip\labelsep\textit{Proof.}~]\ignorespaces}
\def\@tocline#1#2#3#4#5#6#7{\relax
  \ifnum #1>\c@tocdepth 
  \else
    \par \addpenalty\@secpenalty\addvspace{#2}%
    \begingroup \hyphenpenalty\@M
    \@ifempty{#4}{%
      \@tempdima\csname r@tocindent\number#1\endcsname\relax
    }{%
      \@tempdima#4\relax
    }%
    \parindent\z@ \leftskip#3\relax \advance\leftskip\@tempdima\relax
    \rightskip\@pnumwidth plus4em \parfillskip-\@pnumwidth
    #5\leavevmode\hskip-\@tempdima
      \ifcase #1
       \or\or \hskip 1em \or \hskip 2em \else \hskip 3em \fi%
      #6\nobreak\relax
    \dotfill\hbox to\@pnumwidth{\@tocpagenum{#7}}\par
    \nobreak
    \endgroup
  \fi}
\DeclarePairedDelimiterX\MeijerM[3]{\lparen}{\rparen}%
{\begin{smallmatrix}#1 \\ #2\end{smallmatrix}\delimsize\vert\,#3}
\newcommand\MeijerG[8][]{%
  G^{\,#2,#3}_{#4,#5}\MeijerM[#1]{#6}{#7}{#8}}
\newcommand\MeijerG*[7]{%
  G^{\,#1,#2}_{#3,#4}\MeijerM*{#5}{#6}{#7}}
\numberwithin{theorem}{section}
\numberwithin{equation}{section}
\begin{document}
\title[Number field analogue of divisor function \`a la Koshliakov]{Number field analogue of divisor function \`a la Koshliakov}
\author{Soumyarup Banerjee and Rahul Kumar}\thanks{2010 \textit{Mathematics Subject Classification.} Primary 11M06, 33E20; Secondary 33C10.\\
\textit{Keywords and phrases.} Number field, divisor function, Dedekind zeta function, Koshliakov kernel}
\address{ Discipline of Mathematics, Indian Institute of Technology Gandhinagar, Palaj, Gandhinagar 382355, Gujarat, India}\email{soumyarup.b@iitgn.ac.in;  rahul.kumr@iitgn.ac.in}

\begin{abstract}
In this article, we study a divisor function in an arbitrary number field akin to Koshliakov's work on Vorono\"{\dotlessi} summation formula. More precisely, we generalize Koshliakov's kernel and Koshliakov's transform over any number field to obtain identities for the Lambert series associated to the divisor function in an arbitrary number field.
\end{abstract}
\maketitle
\vspace{-0.8cm}

\section{Introduction}\label{intro}
The asymptotic behaviour of an arithmetic function has long been a fascinating subject in analytic number theory. In particular, one often investigates the behaviour of $a(n)$ as $n$ increases. A common technique to understand arithmetic functions involves studying the partial sums $\sum_{n\leq x}a(n)$. For example, Dirichlet famously estimated the asymptotic behaviour of the partial sums $\sum_{n\leq x} d(n)$ by relating it to the problem of counting the number of lattice points lying inside or on the hyperbola, where $d(n)$ denotes the divisor function i.e, $d(n) = \sum_{d\mid n} 1$. He obtained an asymptotic formula with the main term $x\log x + (2\gamma - 1)x$, where $\gamma$ is the Euler's constant and an error term of order $\sqrt{x}$. The problem of estimating the error term is known as the Dirichlet hyperbola problem or the Dirichlet divisor problem. The bound on the error term has been further improved by many mathematicians. At this writing, the best estimate $O(x^{131/416+\epsilon})$, for each $\epsilon > 0$, as $x \to \infty$, is due to M.N. Huxley \cite{Huxley}. On the other hand, G.H. Hardy \cite{Hardy} has shown that the error term can not be equal to $O(x^{1/4})$, with the best result in this direction currently due to K. Soundararajan \cite{Sound}. It is conjectured that the error term may be written as $O(x^{1/4+\epsilon})$ for each $\epsilon > 0$, as $x \to \infty$.

The Dirichlet divisor problem can be naturally generalized by considering the function $\sigma_a(n) : = \sum_{d\mid n} d^a$, where $a$ is any complex number. The problem of determining the correct order of magnitude of the error term as $x \to \infty$ for the partial sum $\sum_{n\leq x} \sigma_a(n)$ is known as the extended divisor problem \cite{Lau}. In this case, the main term can be evaluated as $\zeta(1-a) x+ \frac{\zeta(1+a)}{1+a}x^{1+a}$, where $\zeta(s)$ denotes the Riemann zeta function. It is conjectured that for each $\epsilon >0$, the error term may be written as $O(x^\epsilon)$ for
$-1< a \leq-1/2$ and  $O(x^{1/4+a/2+\epsilon})$ for $-1/2\leq a < 0$ as $x \to \infty$.

Vorono\"{\dotlessi} \cite{Voronoi} introduced a new phase of study into the Dirichlet divisor problem. He was able to express the error term as an infinite series involving the Bessel functions which are defined in \S \ref{sec:specialfunctions}. More precisely, letting $Y_\nu$ (resp. $K_\nu$) denote the Bessel function of the second kind (resp. modified Bessel function of second kind) of order $\nu$ and $\gamma$ denote the Euler constant, a celebrated identity of Vorono\"{\dotlessi} is given by
\begin{equation}\label{Voronoi 1-identity}
\sideset{}{'}\sum_{n\le x}\!\!d(n) =x\log x+(2\gamma-1)x+\frac{1}{4} - \sqrt{x}\sum_{k=1}^\infty\frac{d(k)}{\sqrt{k}}
\left(Y_1\left(4\pi\,\sqrt{xk}\,\right)+\frac{2}{\pi}K_1\left(4\pi\,\sqrt{xk}\,\right)\right),
\end{equation}
where $\sum'$ indicates that only $\frac{1}{2}d(n)$ is counted when $x$ is an integer. In the same article \cite{Voronoi}, Vorono\"{\dotlessi} also obtained a more general form of \eqref{Voronoi 1-identity}, namely
\begin{equation}\label{Voronoi 2-identity}
\sum_{\alpha<n<\beta} d(n) f(n) = \int_\alpha^\beta (2\gamma + \log t) f(t) {\rm d}t + 2\pi \sum_{n=1}^\infty d(n) \int_\alpha^\beta f(t) \left(\frac{2}{\pi} K_0(4\pi \sqrt{nt}) - Y_0(4\pi \sqrt{nt}) \right) {\rm d}t,
\end{equation}
where $f(t)$ is a function of bounded variation in $(\alpha, \beta)$ and $0<\alpha<\beta$. A shorter proof of the above identity for $0<\alpha<\beta$ with $\alpha, \beta \not\in \Z$ was offered by Koshliakov in \cite{Koshliakov}, where he assumed $f$ any analytic function lying inside a closed contour strictly containing the interval $[\alpha, \beta]$. The identity \eqref{Voronoi 2-identity} was further generalized for the general divisor function in \cite[Section 6, 7]{bdrz}, which precisely states that for $0<\alpha<\beta$ with $\alpha, \beta \not\in \Z$ and $-\frac{1}{2} < \Re (s) < \frac{1}{2}$,
\begin{align}\label{Voronoi identity for sigma_a}
\sum_{\alpha<n<\beta} &\sigma_{-s}(n)f(n) = \int_{\alpha}^{\beta} \left(\zeta(1+s)+t^{-s}\zeta(1-s)\right) f(t) \, {\rm d}t +2\pi \sum_{n=1}^{\infty} \sigma_{-s}(n)n^{s/2}  \nonumber \\
&\times \int_{\alpha}^{\beta} t^{-\frac{s}{2}} \left \lbrace \left( \frac{2}{\pi} K_s(4\pi \sqrt{nt}) - Y_s(4\pi \sqrt{nt}) \right) \cos\left( \frac{\pi s}{2}\right) - J_s(4\pi \sqrt{nt}) \sin\left( \frac{\pi s}{2}\right)  \right \rbrace f(t) \, {\rm d} t 
\end{align}
where $f$ denotes a function which is analytic inside a closed contour strictly containing $[\alpha, \beta]$. 

Throughout the paper, we let our number field be $\K$ with extension degree $[\K : \Q] = d$ and signature $(r_1, r_2)$ (i.e., $d = r_1+2r_2$). Let $d_\K$ denotes the discriminant of $\K$ with absolute value $D_\K$. Let $\mathcal{O}_\K$ be its ring of integers and $v_{\K}(m)$ denotes the number of non-zero integral ideals in $\mathcal{O}_K$ with norm $m$. Let $\mathfrak{N}$ be the norm map of $\K$ over $\Q$ and $\mathfrak{N}_{\K/\Q}(I)$ denotes the absolute norm of an non-zero integral ideal $I \subseteq \mathcal{O}_\K$.  We generalize the divisor function for an arbitrary number field $\K$ as
$$\sigma_{\K, a}(n) = \sum_{\substack{I \subseteq \mathcal{O}_\K \\  \mathfrak{N}_{\K/\Q}(I) \mid n}} \left( \mathfrak{N}_{\K/\Q}(I)\right)^a$$
where $a$ is any complex number. Note that for $\K = \Q$, we have $\sigma_{\K, a}(n) = \sigma_a(n)$. We denote the Dedekind zeta function over $\K$ as $\zeta_\K(s)$ which is defined later in \S \ref{Dedekind}.

Our first goal here is to find a Vorono\"{\dotlessi}-type identity for the divisor function $\sigma_{\K, a}(n)$ over any number field $\K$. Let $(v)_\ell$ denotes an $\ell$-tuple with all of its entries equal to $v$. For any $\nu \in \C$ and $x \in \C\setminus \lbrace 0 \rbrace$, we define a function over any number field $\K$ of degree $d$ with signature $(r_1, r_2)$ as
\begin{equation}\label{Our Kernel}
G_{\K, \nu}(x) := \MeijerG*{d+1}{0}{0}{2(d+1)}{-}{-\frac{\nu}{2}, (\frac{\nu}{2})_{r_1+r_2}, \left(\frac{1+\nu}{2}\right)_{r_2}; \frac{1-\nu}{2}, (\frac{\nu}{2})_{r_2}, \left(\frac{1+\nu}{2}\right)_{r_1+r_2}}{\frac{x^2}{16}}
\end{equation}
 where the function on the right hand side denotes the ``Meijer G-function"  which is defined later in \S \ref{sec:specialfunctions}. The following theorem provides the Vorono\"{\dotlessi}-type identity for $\sigma_{\K, a}(n)$ over any number field $\K$. 
\begin{theorem}\label{zetazetathm}
Let $H$ be the residue of $\zeta_\K(s)$ at $s=1$. For any complex number $a$ with $-\frac{1}{2}<\mathrm{Re}(a)<\frac{1}{2}$ and any Schwarz function $f$, we have
\begin{align}\label{zetazetathmeqn}
\sum_{n=1}^\infty\sigma_{\K,a}(n)f(n)=\int_0^\infty \left(\zeta_\K(1-a)+ t^a \zeta(1+a) H \right)  f(t) \, {\rm d}t -\frac{1}{2} \zeta_\K(-a)f(0^+) \nonumber\\
+2 \pi^{ \frac{1+a +(1-a)d}{2}}D_{\K}^{\frac{a-1}{2}}\sum_{n=1}^\infty \sigma_{\K,-a}(n)n^{a/2} \int_0^\infty t^{a/2}G_{\K, a/2}\left(\frac{4\pi^{(d+1)}nt}{D_\K}\right)f(t)\ {\rm d}t,
\end{align}
provided the Mellin transform of $f$ decays faster than any polynomial in any bounded vertical strip.
\end{theorem}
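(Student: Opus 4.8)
The plan is to collapse the left-hand side into a single Mellin--Barnes integral, move the line of integration across the poles to produce the explicit terms, and then use the functional equations of $\zeta$ and $\zeta_\K$ to turn the remaining integral into the Meijer $G$-series. First I would record the arithmetic input. Summing the definition of $\sigma_{\K,a}$ over $n$ and factoring out the ideal norm gives the Dirichlet series identity
\begin{equation*}
\sum_{n=1}^\infty\frac{\sigma_{\K,a}(n)}{n^s}=\zeta(s)\,\zeta_\K(s-a),\qquad \Re(s)>1+\max\{0,\Re(a)\},
\end{equation*}
which is the source of the name of the theorem and which, with $a$ replaced by $-a$, will reappear on the dual side as the series in $\sigma_{\K,-a}$. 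Writing $\widetilde f(s)=\int_0^\infty f(t)t^{s-1}\,{\rm d}t$ and inserting the Mellin inversion $f(n)=\frac{1}{2\pi i}\int_{(c)}\widetilde f(s)n^{-s}\,{\rm d}s$, an interchange of sum and integral (valid for $c>\tfrac32$ by absolute convergence together with the super-polynomial decay of $\widetilde f$) gives
\begin{equation*}
\sum_{n=1}^\infty\sigma_{\K,a}(n)f(n)=\frac{1}{2\pi i}\int_{(c)}\widetilde f(s)\,\zeta(s)\,\zeta_\K(s-a)\,{\rm d}s.
\end{equation*}

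Next I would shift the line of integration from $\Re(s)=c$ to a line $\Re(s)=c'$ with $-1<c'<\min\{0,\Re(a)\}$, which is possible by the restriction $-\tfrac12<\Re(a)<\tfrac12$. Three simple poles are crossed: $s=1+a$ coming from $\zeta_\K(s-a)$, $s=1$ coming from $\zeta(s)$, and $s=0$ coming from $\widetilde f(s)$. Using $\operatorname{Res}_{s=1+a}\zeta_\K(s-a)=H$, $\operatorname{Res}_{s=1}\zeta(s)=1$, $\zeta(0)=-\tfrac12$ and $\operatorname{Res}_{s=0}\widetilde f(s)=f(0^+)$, and recalling $\widetilde f(1+a)=\int_0^\infty t^a f(t)\,{\rm d}t$ and $\widetilde f(1)=\int_0^\infty f(t)\,{\rm d}t$, these residues reproduce exactly the two integral terms and the boundary term $-\tfrac12\zeta_\K(-a)f(0^+)$ of \eqref{zetazetathmeqn}. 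The shift itself and the vanishing of the horizontal segments are justified by the polynomial growth of $\zeta(s)\zeta_\K(s-a)$ in vertical strips (convexity bounds for $\zeta$ and $\zeta_\K$) set against the hypothesis that $\widetilde f$ decays faster than any polynomial.

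On the remaining line $\Re(s)=c'$ I would apply the functional equations
\begin{equation*}
\zeta(s)=2^s\pi^{s-1}\sin\!\tfrac{\pi s}{2}\,\Gamma(1-s)\,\zeta(1-s),\qquad \Lambda_\K(w)=\Lambda_\K(1-w),
\end{equation*}
the latter with $w=s-a$ and $\Lambda_\K(w)=D_\K^{w/2}\bigl(\pi^{-w/2}\Gamma(\tfrac w2)\bigr)^{r_1}\bigl(2(2\pi)^{-w}\Gamma(w)\bigr)^{r_2}\zeta_\K(w)$, to write $\zeta(s)\zeta_\K(s-a)=\Phi(s)\,\zeta(1-s)\zeta_\K(1-s+a)$, where $\Phi(s)$ gathers all the power, trigonometric and gamma factors. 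Since $c'<\min\{0,\Re(a)\}$, the factor $\zeta(1-s)\zeta_\K(1-s+a)$ lies in its region of absolute convergence and equals $\sum_n\sigma_{\K,-a}(n)n^{s-1}$ by the displayed Dirichlet series with $a\mapsto-a$. Interchanging summation and integration and then restoring $\widetilde f(s)=\int_0^\infty f(t)t^{s-1}\,{\rm d}t$, the shifted integral becomes
\begin{equation*}
\sum_{n=1}^\infty\sigma_{\K,-a}(n)\int_0^\infty f(t)\left(\frac{1}{2\pi i}\int_{(c')}\Phi(s)(nt)^{s-1}\,{\rm d}s\right){\rm d}t.
\end{equation*}

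It remains to identify the inner Mellin--Barnes integral with the kernel. The substitution $w=\tfrac{s-1}{2}$ is the decisive move: it contributes the Jacobian factor $2$ standing in front of the series, and it converts $(nt)^{s-1}$ into $\bigl(\tfrac{x^2}{16}\bigr)^{w}$ with $x=\tfrac{4\pi^{d+1}nt}{D_\K}$, explaining the squared argument in \eqref{Our Kernel}. After the Legendre duplication formula is used to split the $r_2$ complex gamma factors $\Gamma(w)$ and the reflection formula is used to rewrite $\sin\tfrac{\pi s}{2}\,\Gamma(1-s)$, the transformed $\Phi$ becomes precisely the ratio of $d+1$ numerator gamma factors over $d+1$ denominator gamma factors with the parameter lists of \eqref{Our Kernel} for $\nu=a/2$, while the surviving powers of $2$, $\pi$ and $D_\K$ collect into the constant $2\pi^{(1+a+(1-a)d)/2}D_\K^{(a-1)/2}$ and the symmetric factor $(nt)^{a/2}$ (whose $n^{a/2}$ joins $\sigma_{\K,-a}(n)$). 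I expect the main obstacle to be exactly this last step: the meticulous bookkeeping of the gamma factors — matching the $r_1$ real and $r_2$ complex infinite places against the duplication and reflection identities so that both the count $2(d+1)$ and the individual parameters come out to be those of $G_{\K,a/2}$ — together with tracking every constant through the two changes of variables so that the argument is precisely $\tfrac{4\pi^{d+1}nt}{D_\K}$.
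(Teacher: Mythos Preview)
Your overall strategy coincides with the paper's: write the sum as a Mellin--Barnes integral of $\widetilde f(s)\zeta(s)\zeta_\K(s-a)$, shift the line past the three poles, apply the functional equations on the new line, expand $\zeta(1-s)\zeta_\K(1-s+a)$ as the Dirichlet series in $\sigma_{\K,-a}$, and identify the remaining $s$-integral with the Meijer $G$-kernel via duplication and reflection.

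There is, however, one step that fails as written. After moving to $\Re(s)=c'<0$ you say ``restoring $\widetilde f(s)=\int_0^\infty f(t)t^{s-1}\,{\rm d}t$'' and then swap the $t$- and $s$-integrals. But on that line the integral $\int_0^\infty f(t)t^{s-1}\,{\rm d}t$ diverges at $t=0$: a generic Schwartz function has $f(0^+)\neq 0$, which is exactly why your residue at $s=0$ was nonzero. On $\Re(s)=c'$ the symbol $\widetilde f(s)$ is only the analytic continuation, not the integral, so the interchange is not available. The paper repairs this with the relation $\widetilde f(s)=-\mathcal M(f')(s+1)/s$: it trades $\widetilde f$ for the Mellin transform of $f'$, which \emph{is} given by a convergent integral on the relevant line, carries the $t$-integral of $f'$ through the computation, and only at the very end undoes this by an integration by parts in $t$. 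An equivalent cure in your framework is, after extracting the $n$-sum, to shift the inner $s$-contour back into $0<\Re(s)<1$ before reinstating the $t$-integral; this is legitimate because the factor $\sin(\tfrac{\pi s}{2})$ inside your $\Phi(s)$ has a simple zero at $s=0$ that cancels the simple pole of $\widetilde f$, so no residue is picked up.

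A second point you pass over, and on which the paper spends real effort, is the convergence of the resulting series $\sum_n \sigma_{\K,-a}(n)n^{a/2}\int_0^\infty t^{a/2}G_{\K,a/2}(\cdot)f(t)\,{\rm d}t$. The kernel $G_{\K,a/2}$ decays only like a small negative power of $n$ times an oscillatory factor, so this series is not absolutely convergent; the paper extracts the large-argument asymptotic of the Meijer $G$-function, integrates by parts once in $t$ to gain an extra $n^{-1/(d+1)}$, and then appeals to Dirichlet's test. Without this, the interchange of the $n$-sum with the $s$-integral (and hence with the $t$-integral) that you perform on the shifted line is not justified. The gamma bookkeeping you flag as the main obstacle is indeed tedious, but it is routine once these two analytic issues are settled.
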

\noindent
Koshliakov in \cite{Koshliakov2} investigated integrals involving the following kernels
\begin{equation}\label{First Koshliakov kernel}
\cos(\pi \nu)\left\lbrace \frac{2}{\pi}K_{2\nu}(2\sqrt{xt})-Y_{2\nu}(2\sqrt{xt})  \right\rbrace - \sin(\pi \nu)J_{2\nu}(2\sqrt{xt})
\end{equation} 
and
\begin{equation*}
 \cos(\pi \nu)\left\lbrace \frac{2}{\pi}K_{2\nu}(2\sqrt{xt})+Y_{2\nu}(2\sqrt{xt})  \right\rbrace + \sin(\pi \nu) J_{2\nu}(2\sqrt{xt})
\end{equation*}
where the first one is called the first Koshliakov kernel in \cite[p. 897]{bdrz}. It has significant importance in the Vorono\"{\dotlessi} summation formula since it occurs naturally in the last term of \eqref{Voronoi identity for sigma_a}.
 
In this article, we generalize first Koshliakov kernel over any number field $\K$ of degree $d$ with signature $(r_1, r_2)$ by considering the kernel $G_{\K, \nu}(xt)$. The following theorem provides the special cases of the generalized kernel.
\begin{theorem}\label{Special case of new kernel}
Let $x$ and $t$ be any non-zero complex numbers.
\begin{itemize}
\item[(1)]
For $\K = \Q$, we have
$$G_{\K, \nu}(xt) = \cos(\pi \nu)\left\lbrace \frac{2}{\pi}K_{2\nu}(2\sqrt{xt})-Y_{2\nu}(2\sqrt{xt})  \right\rbrace - \sin(\pi \nu)J_{2\nu}(2\sqrt{xt}).$$
\item[(2)]
For $\K$ any imaginary quadratic number field, we have
\begin{align*}
G_{\K, \nu}(xt) = \frac{\sqrt{\pi}}{\sin(2\pi\nu)}&\Bigg\{\frac{2^{1-4\nu}}{\Gamma^2(1-2\nu)}\left(\frac{xt}{4}\right)^{-\nu}\pFq05{-}{1-\nu,1-\nu,\frac{1}{2}-\nu,\frac{1}{2}-\nu,\frac{1}{2}}{-\frac{x^2 t^2}{16}}\nonumber\\
&-\frac{2^{1+2\nu}\cos(\pi\nu)}{\Gamma(1+2\nu)}\left(\frac{xt}{4}\right)^{\nu}\pFq05{-}{1+\nu,\frac{1}{2}+\nu,\frac{1}{2},\frac{1}{2},1}{-\frac{x^2 t^2}{16}}\nonumber\\
&-\frac{2^{4+2\nu}\sin(\pi\nu)}{\Gamma(2+2\nu)}\left(\frac{xt}{4}\right)^{1+\nu}\pFq05{-}{\frac{3}{2}+\nu,1+\nu,\frac{3}{2},\frac{3}{2},1}{-\frac{x^2 t^2}{16}}\Bigg\},
\end{align*}
\end{itemize}
where ${}_0F_5$ denotes the generalized hypergeometric function which is defined in \S \ref{sec:specialfunctions}.
\end{theorem}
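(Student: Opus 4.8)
The plan is to evaluate the defining contour integral in \eqref{Our Kernel} directly, specialising the signature in each case. Writing $y=xt$ and $z=y^{2}/16$, the Mellin--Barnes definition of the Meijer $G$-function presents the kernel as $G_{\K,\nu}(y)=\frac{1}{2\pi i}\int_{L}\Psi(s)\,z^{s}\,ds$, where $\Psi(s)$ is the quotient of Gamma functions read off from the lower parameters of \eqref{Our Kernel}: the first $d+1$ entries contribute factors $\Gamma(b-s)$ to the numerator and the remaining $d+1$ entries contribute factors $\Gamma(1-b+s)$ to the denominator. I will evaluate this integral by matching it against the Mellin transform of the asserted special function in part~(1), and by collecting residues into hypergeometric series in part~(2).

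For part~(1) I set $d=1$, $r_{1}=1$, $r_{2}=0$, so that
\begin{align*}
\Psi(s)=\frac{\Gamma(-\tfrac{\nu}{2}-s)\,\Gamma(\tfrac{\nu}{2}-s)}{\Gamma(\tfrac{1+\nu}{2}+s)\,\Gamma(\tfrac{1-\nu}{2}+s)}.
\end{align*}
I then compute the Mellin transform in $y$ of the right-hand side of~(1). Using the standard evaluations $\int_{0}^{\infty}K_{2\nu}(2\sqrt{y})\,y^{w-1}\,dy=\tfrac12\Gamma(w-\nu)\Gamma(w+\nu)$ and $\int_{0}^{\infty}J_{2\nu}(2\sqrt{y})\,y^{w-1}\,dy=\Gamma(w+\nu)/\Gamma(\nu-w+1)$, together with $Y_{2\nu}=(\cos2\pi\nu\,J_{2\nu}-J_{-2\nu})/\sin2\pi\nu$, the Mellin transform of the combination reduces---after applying $\sin2\pi\nu=2\sin\pi\nu\cos\pi\nu$ and $\cos2\pi\nu+2\sin^{2}\pi\nu=1$---to the single quotient $\tfrac12\,4^{w}\,\Gamma(\tfrac{w-\nu}{2})\Gamma(\tfrac{w+\nu}{2})\big/[\Gamma(\tfrac{1+\nu-w}{2})\Gamma(\tfrac{1-\nu-w}{2})]$. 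A change of variable $s\mapsto-w/2$ shows this is precisely the Mellin transform in $y$ of $\frac{1}{2\pi i}\int_{L}\Psi(s)z^{s}\,ds$, so Mellin inversion yields the claimed identity. (Equivalently, one inserts Mellin--Barnes representations of the three Bessel functions into the combination and recognises the resulting integrand as $\Psi$ via the duplication formula.)

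For part~(2) I set $d=2$, $r_{1}=0$, $r_{2}=1$, so that the numerator bases are $-\tfrac{\nu}{2},\tfrac{\nu}{2},\tfrac{1+\nu}{2}$ and the kernel is a $G^{3,0}_{0,6}$. For generic $\nu$ the three progressions of poles $s=-\tfrac{\nu}{2}+k,\ \tfrac{\nu}{2}+k,\ \tfrac{1+\nu}{2}+k$ $(k\ge0)$ are simple and pairwise disjoint, so Slater's theorem expresses the integral as a sum of three ${}_{0}F_{5}$ series (here $q-1=5$), each carried by the factor $z^{b_{h}}$ and evaluated at the argument $(-1)^{-3}z=-z=-x^{2}t^{2}/16$. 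The prefactors $z^{-\nu/2},z^{\nu/2},z^{(1+\nu)/2}$ are exactly $(xt/4)^{-\nu},(xt/4)^{\nu},(xt/4)^{1+\nu}$, and reading the lower parameters $\{1+b_{h}-b_{j}\}_{j\ne h}$ off the six bases reproduces the three parameter strings in the statement. It then remains to simplify the three Slater coefficients; for the leading one this is $A=\Gamma(\nu)\Gamma(\tfrac12+\nu)\big/[\sqrt{\pi}\,\Gamma(1-\nu)\Gamma(\tfrac12-\nu)]$, and the duplication formula $\Gamma(x)\Gamma(x+\tfrac12)=2^{1-2x}\sqrt{\pi}\,\Gamma(2x)$ together with the reflection formula $\Gamma(2\nu)\Gamma(1-2\nu)=\pi/\sin2\pi\nu$ turn it into $\tfrac{\sqrt{\pi}}{\sin2\pi\nu}\cdot\tfrac{2^{1-4\nu}}{\Gamma^{2}(1-2\nu)}$, matching the first term; the other two coefficients are handled identically.

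I expect the main obstacle in part~(1) to be the Gamma--trigonometric bookkeeping behind this collapse: individually the transforms of $K_{2\nu},Y_{2\nu},J_{2\nu}$ carry factors $\Gamma(w\pm\nu)$ whose poles occur at every shift $w=\pm\nu-k$, whereas the Meijer integrand has poles only at the even shifts $w=\pm\nu-2k$, so the odd-shift residues must cancel; this cancellation is exactly what the reflection relation $\Gamma(2\nu-1)\Gamma(2-2\nu)=-\pi/\sin2\pi\nu$ (equivalently $\cos2\pi\nu+2\sin^{2}\pi\nu=1$) supplies, and verifying it cleanly is the crux. In part~(2) the difficulty is purely computational---normalising each Slater coefficient and each string of Pochhammer denominators into the exact displayed form---after which the genericity hypothesis on $\nu$ is removed by analytic continuation, both sides being meromorphic in $\nu$.
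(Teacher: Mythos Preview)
Your argument is correct. For part~(2) it coincides with the paper's: both apply Slater's expansion (Proposition~\ref{Slater}) to the $G^{3,0}_{0,6}$ and simplify the resulting coefficients with the reflection and duplication formulas.

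For part~(1), however, your route is genuinely different. The paper also starts from $G^{2,0}_{0,4}$ but applies Slater's theorem to obtain a sum of two ${}_0F_3$'s, then invokes the elementary series identity
\[
\pFq03{-}{1\pm\nu,\tfrac12\pm\nu,\tfrac12}{z^2}=\tfrac12\Big(\pFq01{-}{1\pm2\nu}{4z}+\pFq01{-}{1\pm2\nu}{-4z}\Big)
\]
to reduce to ${}_0F_1$'s, and finally identifies these with $K_{2\nu}$, $J_{\pm 2\nu}$ via their series definitions. You instead bypass Slater entirely for this case: you compute the Mellin transform of the Koshliakov combination, collapse the three Bessel contributions to the single quotient $\tfrac12\,4^{w}\Gamma(\tfrac{w-\nu}{2})\Gamma(\tfrac{w+\nu}{2})/[\Gamma(\tfrac{1+\nu-w}{2})\Gamma(\tfrac{1-\nu-w}{2})]$, and recognise this (after $w\mapsto -2s$) as exactly the Mellin--Barnes integrand $\Psi(s)$. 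Your approach is more analytic and arguably more conceptual---it explains \emph{why} the particular combination $\cos(\pi\nu)(\tfrac{2}{\pi}K_{2\nu}-Y_{2\nu})-\sin(\pi\nu)J_{2\nu}$ arises, namely as the unique blend whose Mellin transform has poles only on the even sublattice---whereas the paper's approach is more uniform, reusing the same Slater machinery in both parts and avoiding any appeal to Mellin transforms of oscillatory Bessel functions (whose convergence is only conditional). Either way the algebraic crux is the same trigonometric collapse you flag, and your identification of $\cos2\pi\nu+2\sin^{2}\pi\nu=1$ as the key identity is accurate.
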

The following corollary follows immediately from Theorem \ref{zetazetathm} by substituting the value of the kernel from Theorem \ref{Special case of new kernel} for $\K= \Q$.

\begin{corollary}\label{voronoiforsigma}
Let $-\frac{1}{2}<\mathrm{Re}(a)<\frac{1}{2}$ and $f$ be any Schwarz function. Then we have
\begin{align*}
&\sum_{n=1}^\infty \sigma_{a}(n)f(n)=\int_0^\infty \left(\zeta(1-a)+ t^a \zeta(1+a) \right)  f(t) \, {\rm d}t -\frac{1}{2} \zeta(-a)f(0^+)\nonumber\\
&+2\pi\sum_{n=1}^\infty \sigma_{a}(n)n^{-\frac{a}{2}}\int_0^\infty y^{\frac{a}{2}}f(y)\Bigg\{\left(\frac{2}{\pi}K_a(4\pi\sqrt{ny}-Y_a(4\pi\sqrt{ny}))\right) \cos\left(\frac{\pi a}{2}\right) -J_a(4\pi\sqrt{ny})\sin\left(\frac{\pi a}{2}\right)\Bigg\}\ dy.
\end{align*}
provided the Mellin transform of $f$ decays faster than any polynomial in any bounded vertical strip.
\end{corollary}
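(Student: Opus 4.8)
The plan is to specialize Theorem \ref{zetazetathm} to the rational field $\K = \Q$ and then insert the closed form of the kernel supplied by part (1) of Theorem \ref{Special case of new kernel}; no fresh analysis is required, so the work is entirely a matter of tracking the field data and the constants.

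First I would record the relevant data for $\K = \Q$: the extension degree is $d = 1$ with signature $(r_1, r_2) = (1, 0)$, the absolute discriminant is $D_\K = 1$, the Dedekind zeta function is $\zeta_\K(s) = \zeta(s)$ with residue $H = 1$ at $s = 1$, and the divisor function reduces to $\sigma_{\K, a}(n) = \sigma_a(n)$. Substituting these into \eqref{zetazetathmeqn} turns the first integral into $\int_0^\infty(\zeta(1-a) + t^a\zeta(1+a))f(t)\,\mathrm{d}t$ and the boundary term into $-\frac{1}{2}\zeta(-a)f(0^+)$, which reproduces the first line of the corollary.

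Next I would simplify the remaining series. The prefactor collapses because the exponent of $\pi$ becomes $\frac{1 + a + (1-a)d}{2} = \frac{1 + a + (1 - a)}{2} = 1$ while $D_\K^{(a-1)/2} = 1$, so the constant $2\pi^{(1+a+(1-a)d)/2}D_\K^{(a-1)/2}$ equals $2\pi$. The argument of the kernel simplifies to $\frac{4\pi^{d+1}nt}{D_\K} = 4\pi^2 nt$; since $2\sqrt{4\pi^2 nt} = 4\pi\sqrt{nt}$, setting $\nu = a/2$ (so that $2\nu = a$) in Theorem \ref{Special case of new kernel}(1) gives
\begin{equation*}
G_{\K, a/2}(4\pi^2 nt) = \cos\!\left(\frac{\pi a}{2}\right)\!\left\{\frac{2}{\pi}K_a\!\left(4\pi\sqrt{nt}\right) - Y_a\!\left(4\pi\sqrt{nt}\right)\right\} - \sin\!\left(\frac{\pi a}{2}\right)J_a\!\left(4\pi\sqrt{nt}\right),
\end{equation*}
which is exactly the expression inside the braces of the stated identity.

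Finally, to match the summand I would invoke the elementary reflection $\sigma_{-a}(n) = n^{-a}\sigma_a(n)$, obtained by replacing each divisor $\delta$ of $n$ by $n/\delta$; this rewrites $\sigma_{\K,-a}(n)n^{a/2} = \sigma_{-a}(n)n^{a/2}$ as $\sigma_a(n)n^{-a/2}$. Renaming the integration variable $t \mapsto y$ then delivers the displayed series verbatim, and the decay hypothesis on the Mellin transform of $f$ is inherited unchanged from Theorem \ref{zetazetathm}. The only thing demanding any attention is this bookkeeping of the constants together with the divisor-function reflection; everything else is a direct specialization, so I anticipate no real obstacle.
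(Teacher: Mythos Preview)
Your proposal is correct and follows exactly the route the paper indicates: specialize Theorem \ref{zetazetathm} to $\K=\Q$ and substitute the closed form of the kernel from Theorem \ref{Special case of new kernel}(1). The additional bookkeeping you supply---tracking $d=1$, $D_\K=1$, $H=1$, collapsing the prefactor to $2\pi$, and invoking $\sigma_{-a}(n)=n^{-a}\sigma_a(n)$---is precisely what is needed to make the ``follows immediately'' explicit.
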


\noindent
The integral Involving the first Koshliakov kernel of the form
$$\int_0^\infty f(t, \nu) \left \lbrace \cos(\pi \nu)\left( \frac{2}{\pi}K_{2\nu}(2\sqrt{xt})-Y_{2\nu}(2\sqrt{xt})  \right) - \sin(\pi \nu)J_{2\nu}(2\sqrt{xt}) \right\rbrace {\rm d}t$$
is sometimes known as the first Koshliakov transform of $f(t, \nu)$ (cf. \cite{bdrz}). Some of the integrals of the above form for $\nu = 0$ were previously investigated by Dixon and Ferrar \cite{DF}. In the same article \cite[Equation (8)]{Koshliakov2}, Koshliakov proved a remarkable result that $K_\nu(t)$ is self-reciprocal in the first Koshliakov kernel. In other words, for $-\frac{1}{2} < \Re (\nu) < \frac{1}{2}$, the following identity
\begin{equation}\label{Self reciprocal}
\int_0^\infty K_\nu(t)\left \lbrace \cos(\pi \nu)\left( \frac{2}{\pi}K_{2\nu}(2\sqrt{xt})-Y_{2\nu}(2\sqrt{xt})  \right) - \sin(\pi \nu)J_{2\nu}(2\sqrt{xt}) \right\rbrace {\rm d}t = K_\nu(x)
\end{equation}
holds. It must be mentioned here that there are few functions in literature whose Koshliakov transforms have closed-form evaluations. However, it is always desirable to have such closed form evaluations, whenever possible, in view of their applications in number theory. Recently, such evaluations were obtained in \cite{DKesarwani}, \cite{DKumar} and \cite{DRoy} where they were fruitfully used to obtain interesting number-theoretic applications. 

In the same article \cite[Equation (15)]{Koshliakov2}, Koshliakov evaluated a general integral involving Koshliakov first kernel in terms of the generalized hypergeometric series. The result precisely states that for $\mu>-1/2$ and $\nu> -1/2+|\mu|$, the identity\footnote{It has been shown in \cite[Theorem 2.1]{DKK} that when $\mu\neq-\nu$, this result actually holds for $\nu\in\mathbb{C}\backslash\left(\mathbb{Z}\backslash\{0\}\right)$, $\textup{Re}(\mu)>-1/2$, $\textup{Re}(\nu)>-1/2$ and $\textup{Re}(\mu+\nu)>-1/2$; otherwise, it holds for $-1/2<\textup{Re}(\nu)<1/2$.}
\begin{align}\label{Koshliakov transform}
&\int_0^\infty K_{\mu}(t)t^{\mu+\nu} \left \lbrace \cos(\pi \nu)\left( \frac{2}{\pi}K_{2\nu}(2\sqrt{xt})-Y_{2\nu}(2\sqrt{xt})  \right) - \sin(\pi \nu)J_{2\nu}(2\sqrt{xt}) \right\rbrace {\rm d}t  \nonumber\\
&= \frac{\pi 2^{\mu+\nu-1}}{\sin(\nu \pi)} \left\lbrace \left(\frac{x}{2}\right)^{-\nu}\frac{\Gamma(\mu + \frac{1}{2})}{\Gamma(1-\nu)\Gamma(\frac{1}{2}-\nu)} {}_1F_2\left(\begin{matrix}
\mu + \frac{1}{2}\\ \frac{1}{2}- \nu, 1-\nu
\end{matrix} \bigg | \frac{x^2}{4}\right)\right. 
\left.- \left(\frac{x}{2}\right)^{\nu}\frac{\Gamma(\mu + \nu + \frac{1}{2})}{\Gamma(1+\nu)\Gamma(\frac{1}{2})} {}_1F_2\left(\begin{matrix}
\mu + \nu + \frac{1}{2}\\ \frac{1}{2}, 1+\nu
\end{matrix} \bigg | \frac{x^2}{4}\right)  \right\rbrace
\end{align}
holds. Equation \eqref{Self reciprocal} can be obtained as a special case of the above identity by plugging $\mu = -\nu$. 

Our generalization of the first Koshliakov kernel naturally leads us to study the integrals involving the general kernel $G_{\K, \nu}(xt)$. The following result provides the generalization of \eqref{Koshliakov transform}.
\begin{theorem}\label{Generalized Koshliakov transform}
For $\textup{Re}(\mu)$, $\textup{Re}(\nu)$, $\textup{Re}(\mu+\nu)>-1/2$, we have
\begin{equation}\label{Eqn:Generalized Koshliakov transform}
\int_0^\infty K_{\mu}(t)t^{\mu+\nu} G_{\K, \nu}(xt) \, {\rm d}t = 2^{\mu + \nu - 1} \MeijerG*{d+1}{1	}{1}{2d+1}{\frac{1-2\mu-\nu}{2}}{-\frac{\nu}{2}, (\frac{\nu}{2})_{r_1+r_2}, \left(\frac{1+\nu}{2}\right)_{r_2}, (\frac{\nu}{2})_{r_2}, \left(\frac{1+\nu}{2}\right)_{r_1+r_2}}{\frac{x^2}{4}}.
\end{equation}
In particular, for $-1/2<\Re(\nu)<1/2$, we have
\begin{equation}\label{Generalized self reciprocal}
\int_0^\infty K_{\nu}(t) G_{\K, \nu}(xt) \, {\rm d} t = \frac{1}{2} \MeijerG*{d+1}{0}{0}{\, 2d}{ - }{-\frac{\nu}{2}, (\frac{\nu}{2})_{r_1+r_2}, \left(\frac{1+\nu}{2}\right)_{r_2}, (\frac{\nu}{2})_{r_2}, \left(\frac{1+\nu}{2}\right)_{r_1+r_2-1}}{\frac{x^2}{4}}.
\end{equation}
\end{theorem}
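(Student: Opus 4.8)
The plan is to reduce everything to a single Mellin--Barnes contour integral. By the definition \eqref{Our Kernel}, the kernel $G_{\K,\nu}$ is a Meijer $G$-function, so it admits the representation
\begin{equation*}
G_{\K,\nu}(y)=\frac{1}{2\pi i}\int_{(c)}\frac{\Gamma\!\left(s-\frac{\nu}{2}\right)\Gamma\!\left(s+\frac{\nu}{2}\right)^{r_1+r_2}\Gamma\!\left(s+\frac{1+\nu}{2}\right)^{r_2}}{\Gamma\!\left(\frac{1+\nu}{2}-s\right)\Gamma\!\left(\frac{2-\nu}{2}-s\right)^{r_2}\Gamma\!\left(\frac{1-\nu}{2}-s\right)^{r_1+r_2}}\left(\frac{y^2}{16}\right)^{-s}ds,
\end{equation*}
where $(c)$ is a vertical line with all numerator poles to its left. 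I would insert this with $y=xt$, use $\left(\frac{(xt)^2}{16}\right)^{-s}=\left(\frac{x^2}{16}\right)^{-s}t^{-2s}$, and interchange the $t$- and $s$-integrations. The inner integral becomes a pure Mellin transform of the Bessel function, for which I would invoke the classical formula $\int_0^\infty K_\mu(t)t^{w-1}\,dt=2^{w-2}\Gamma\!\left(\frac{w-\mu}{2}\right)\Gamma\!\left(\frac{w+\mu}{2}\right)$, valid for $\Re(w)>|\Re(\mu)|$.

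Applying this with $w=\mu+\nu+1-2s$ produces the two Gamma factors $\Gamma\!\left(\frac{1+\nu}{2}-s\right)$ and $\Gamma\!\left(\mu+\frac{1+\nu}{2}-s\right)$ together with the power $2^{\mu+\nu-1-2s}$. The first of these cancels exactly against the $\Gamma\!\left(\frac{1+\nu}{2}-s\right)$ in the bottom row of the kernel, while $\left(\frac{x^2}{16}\right)^{-s}2^{-2s}=\left(\frac{x^2}{4}\right)^{-s}$ and the prefactor $2^{\mu+\nu-1}$ come out cleanly, leaving
\begin{equation*}
2^{\mu+\nu-1}\frac{1}{2\pi i}\int_{(c)}\frac{\Gamma\!\left(s-\frac{\nu}{2}\right)\Gamma\!\left(s+\frac{\nu}{2}\right)^{r_1+r_2}\Gamma\!\left(s+\frac{1+\nu}{2}\right)^{r_2}\Gamma\!\left(\mu+\frac{1+\nu}{2}-s\right)}{\Gamma\!\left(\frac{2-\nu}{2}-s\right)^{r_2}\Gamma\!\left(\frac{1-\nu}{2}-s\right)^{r_1+r_2}}\left(\frac{x^2}{4}\right)^{-s}ds.
\end{equation*}
The new numerator factor $\Gamma\!\left(\mu+\frac{1+\nu}{2}-s\right)=\Gamma(1-a_1-s)$ contributes the single top parameter $a_1=\frac{1-2\mu-\nu}{2}$, and the surviving bottom row is exactly the one displayed in \eqref{Eqn:Generalized Koshliakov transform}; reading off $m=d+1$, $n=p=1$, $q=2d+1$ identifies this integral as $2^{\mu+\nu-1}$ times the Meijer $G$-function on the right of \eqref{Eqn:Generalized Koshliakov transform}, which is the claim.

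For the self-reciprocal special case \eqref{Generalized self reciprocal}, I would set $\mu=-\nu$ and use $K_{-\nu}=K_\nu$, so that $t^{\mu+\nu}=1$ and the prefactor becomes $2^{-1}=\frac12$. With this choice the extra numerator factor simplifies to $\Gamma\!\left(\mu+\frac{1+\nu}{2}-s\right)=\Gamma\!\left(\frac{1-\nu}{2}-s\right)$, which now cancels one of the $r_1+r_2$ copies of $\Gamma\!\left(\frac{1-\nu}{2}-s\right)$ in the denominator. This leaves a balanced Gamma-quotient with no remaining top parameter, i.e.\ a Meijer $G$-function with $n=p=0$ and $q=2d$ in which the trailing block of $\frac{1+\nu}{2}$'s is shortened from $r_1+r_2$ to $r_1+r_2-1$, precisely \eqref{Generalized self reciprocal}.

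The parameter bookkeeping above is harmless; the genuine work is the analytic justification of the interchange of integration and the choice of $(c)$. One must confirm that the $t$-integral of $K_\mu(t)t^{\mu+\nu-2s}$ against the Meijer $G$-integrand converges absolutely, which forces $(c)$ to lie in the strip $\frac{|\Re(\nu)|}{2}<\Re(s)<\frac{\Re(\mu+\nu)+1-|\Re(\mu)|}{2}$: the lower bound keeps the numerator poles $s=\frac{\nu}{2}-k$, $-\frac{\nu}{2}-k$, $-\frac{1+\nu}{2}-k$ to the left, and the upper bound is precisely the region $\Re(\mu+\nu+1-2s)>|\Re(\mu)|$ where the $K_\mu$ Mellin transform is valid. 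This strip is nonempty exactly when $|\Re(\mu)|-\Re(\mu)+|\Re(\nu)|-\Re(\nu)<1$, which unwinds to the three stated hypotheses $\Re(\mu),\Re(\nu),\Re(\mu+\nu)>-\frac12$; thus these conditions are not merely sufficient but are exactly what makes a legitimate contour available. Stirling-type decay of the $G$-integrand along vertical lines then secures absolute convergence and legitimizes the Fubini interchange, so this is where I expect the main technical effort to lie.
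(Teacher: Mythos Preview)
Your proposal is correct and is essentially the paper's own argument: the paper also writes $G_{\K,\nu}(xt)$ as a Mellin--Barnes integral, combines it with the Mellin transform of $K_\mu(t)t^{\mu+\nu}$, and reads off the resulting Gamma quotient as the Meijer $G$-function on the right of \eqref{Eqn:Generalized Koshliakov transform}; the only cosmetic difference is that the paper packages the interchange step as an appeal to Parseval's formula \cite[p.~83]{Paris} rather than invoking Fubini directly. Your discussion of the strip $\frac{|\Re(\nu)|}{2}<\Re(s)<\frac{1+\Re(\mu+\nu)-|\Re(\mu)|}{2}$ and why its nonemptiness is equivalent to the three hypotheses is in fact more explicit than what the paper records.
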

\noindent
It is important to observe here that for $\K = \Q$,  \eqref{Koshliakov transform} can be obtained  as a special case of  \eqref{Eqn:Generalized Koshliakov transform} by applying Theorem \ref{Special case of new kernel} on the left hand side and Proposition \ref{Slater} (stated in the next section) on the right hand side of \eqref{Eqn:Generalized Koshliakov transform}.

In this article, we next study the Lambert series over any number field $\K$
\begin{equation*}
\sum_{n=1}^\infty \sigma_{\K, a}(n) e^{-ny} = \sum_{I \subseteq \mathcal{O}_\K} \frac{\left( \mathfrak{N}_{\K/\Q}(I)\right)^a}{\exp\left( \mathfrak{N}_{\K/\Q}(I) y\right) - 1}
\end{equation*}
where $a$ is any complex number. Recently, Dixit et. al.  \cite{DKK} studied the above  Lambert series for $\K = \Q$ and obtained an explicit transformation of the series for any complex number $a$. As a special case, their result provides the transformation formulas for Eisenstein series and Eichler integrals on SL$_2(\mathbb{Z})$, transfomration for the Dedekind eta function and Ramanujan's celebrated formula for $\zeta(2m+1)$. On the other hand, for $a$ even new transformation formulas have been obtained in \cite[Theorem 2.11, Corollary 2.13]{DKK}. In \cite{BK}, the authors have studied the same series for $\K$ being an imaginary quadratic field, which played a crucial role in studying the explicit identities for the Dedekind zeta function over an imaginary quadratic field.

The transformation of the above series for $\K = \Q$ has been established in \cite[Theorem 2.4]{DKK}, which precisely states that for $\textup{Re}(a)>-1$ and $\textup{Re}(y)>0$,
\begin{align*}
&\sum_{n=1}^\infty  \sigma_a(n)e^{-ny}+\frac{1}{2}\left(\left(\frac{2\pi}{y}\right)^{1+a}\mathrm{cosec}\left(\frac{\pi a}{2}\right)+1\right)\zeta(-a)-\frac{1}{y}\zeta(1-a)\nonumber\\
&=\frac{2\pi}{y\sin\left(\frac{\pi a}{2}\right)}\sum_{n=1}^\infty \sigma_{a}(n)\Bigg(\frac{(2\pi n)^{-a}}{\Gamma(1-a)} {}_1F_2\left(1;\frac{1-a}{2},1-\frac{a}{2};\frac{4\pi^4n^2}{y^2} \right) -\left(\frac{2\pi}{y}\right)^{a}\cosh\left(\frac{4\pi^2n}{y}\right)\Bigg).
\end{align*}
In the next theorem we obtain the generalization of the above result over any number field $\K$.
\begin{theorem}\label{General Lambert series transformation}
For $\Re(y) > 0$, the transformation
\begin{align}\label{Eqn:General Lambert series transformation}
\sum_{n=1}^\infty \sigma_{\K, a}(n) e^{-ny} = \frac{1}{y}\zeta_\K(1-a)- \frac{1}{2}\zeta_\K(-a) + \frac{\Gamma(a+1)\zeta(a+1)H}{y^{a+1}} + \frac{2^{1+\frac{a}{2}}\pi^{\frac{a+(1-a)d}{2}}D_{\mathbb{K}}^{\frac{a-1}{2}}}{y^{1+\frac{a}{2}}}\nonumber\\
\times\sum_{n=1}^\infty \sigma_{\mathbb{K},-a}(n)n^{\frac{a}{2}}\MeijerG*{d+1}{1}{1}{2d+1}{-\frac{a}{4}}{-\frac{a}{4},\left(\frac{a}{4}\right)_{r_1+r_2},\left(\frac{1}{2}+\frac{a}{4}\right)_{r_2};\left(\frac{a}{4}\right)_{r_2},\left(\frac{1}{2}+\frac{a}{4}\right)_{r_1+r_2}}{\frac{4\pi^{2(d+1)}n^2}{y^2D_{\mathbb{K}}^2}}.
\end{align}
holds where $a$ is any complex number with $\Re(a) > -1$.
\end{theorem}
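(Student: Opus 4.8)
The plan is to deduce Theorem~\ref{General Lambert series transformation} from the Vorono\"{\dotlessi}-type summation formula of Theorem~\ref{zetazetathm} by specializing the test function to $f(t)=e^{-ty}$ with $\Re(y)>0$, and then to remove the restriction on $a$ by analytic continuation. First I would check that $f(t)=e^{-ty}$ is admissible: its Mellin transform is $\Gamma(s)y^{-s}$, which decays exponentially (hence faster than any polynomial) in every bounded vertical strip, so the hypothesis of Theorem~\ref{zetazetathm} is met. With this choice the left-hand side of \eqref{zetazetathmeqn} becomes exactly the Lambert series $\sum_{n\ge1}\sigma_{\K,a}(n)e^{-ny}$, and the two elementary pieces on the right evaluate immediately: $\int_0^\infty e^{-ty}\,{\rm d}t=1/y$ and $\int_0^\infty t^a e^{-ty}\,{\rm d}t=\Gamma(a+1)y^{-a-1}$ produce $\tfrac1y\zeta_\K(1-a)+\Gamma(a+1)\zeta(a+1)H\,y^{-a-1}$, while $f(0^+)=1$ gives $-\tfrac12\zeta_\K(-a)$. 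These already match the three explicit terms of \eqref{Eqn:General Lambert series transformation}.

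The core of the argument is to evaluate the Laplace transform of the kernel, namely $\int_0^\infty t^{a/2}G_{\K,a/2}\!\big(\tfrac{4\pi^{d+1}nt}{D_\K}\big)e^{-ty}\,{\rm d}t$, and to recognize it as the Meijer $G$-function appearing in \eqref{Eqn:General Lambert series transformation}. I would insert the Mellin--Barnes representation of $G_{\K,a/2}$ from \eqref{Our Kernel}, interchange the order of integration (justified by the exponential decay of the $\Gamma$-factors along the contour), and use $\int_0^\infty t^{a/2-2s}e^{-ty}\,{\rm d}t=\Gamma(1+\tfrac a2-2s)\,y^{-1-\frac a2+2s}$. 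The new factor $\Gamma(1+\tfrac a2-2s)=\Gamma\big(2(\tfrac12+\tfrac a4-s)\big)$ is then split by the Legendre duplication formula $\Gamma(2z)=\tfrac{2^{2z-1}}{\sqrt\pi}\Gamma(z)\Gamma(z+\tfrac12)$ into $\tfrac{2^{a/2}}{\sqrt\pi}\,4^{-s}\,\Gamma(\tfrac12+\tfrac a4-s)\Gamma(1+\tfrac a4-s)$. The factor $\Gamma(\tfrac12+\tfrac a4-s)$ cancels exactly against the matching factor in the denominator of the Mellin--Barnes integrand for $G_{\K,a/2}$, and collecting $4^{-s}$ with the $(x^2/16)^{-s}$ and $y^{2s}$ turns the remaining power of $s$ into $\big(\tfrac{4\pi^{2(d+1)}n^2}{y^2D_\K^2}\big)^{-s}$. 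What is left is precisely the Mellin--Barnes integral defining $G^{d+1,1}_{1,2d+1}$ with the parameter lists stated in \eqref{Eqn:General Lambert series transformation}, the single extra upper parameter $-\tfrac a4$ being the image of the $\Gamma(1+\tfrac a4-s)$ produced by the Laplace kernel.

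Multiplying this evaluation by the prefactor $2\pi^{(1+a+(1-a)d)/2}D_\K^{(a-1)/2}n^{a/2}$ from \eqref{zetazetathmeqn} and summing over $n$, the accumulated constants $2\cdot\tfrac{2^{a/2}}{\sqrt\pi}\cdot\pi^{(1+a+(1-a)d)/2}$ collapse to $2^{1+a/2}\pi^{(a+(1-a)d)/2}$, reproducing the prefactor $\tfrac{2^{1+a/2}\pi^{(a+(1-a)d)/2}D_\K^{(a-1)/2}}{y^{1+a/2}}$ of the theorem; this establishes \eqref{Eqn:General Lambert series transformation} on the strip $-\tfrac12<\Re(a)<\tfrac12$ where Theorem~\ref{zetazetathm} applies. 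Finally I would extend to all $\Re(a)>-1$ by analytic continuation: the left-hand side is holomorphic in $a$ for fixed $y$, and on the right the apparent singularities of $\zeta_\K(1-a)$ and $\Gamma(a+1)\zeta(a+1)$ at $a=0$ cancel while the $G$-function series converges, so both sides are holomorphic on $\Re(a)>-1$ and agree on a substrip. I expect the main obstacle to be the bookkeeping in the Laplace-transform step: applying duplication to the kernel factor, spotting the precise cancellation with the kernel's denominator, and tracking every power of $2$, $\pi$, $n$, $D_\K$ and $y$ so that the argument lands exactly on $\tfrac{4\pi^{2(d+1)}n^2}{y^2D_\K^2}$ and the upper/lower parameter counts ($r_1+r_2$ versus $r_2$ copies) come out right; rigorously justifying the interchange of summation and integration and the final continuation are the remaining technical points. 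An independent route, shifting the line in the representation $\sum_n\sigma_{\K,a}(n)e^{-ny}=\tfrac1{2\pi i}\int_{(c)}\Gamma(s)\zeta(s)\zeta_\K(s-a)y^{-s}\,{\rm d}s$ past the poles at $s=1,1+a,0$ and applying the functional equations of $\zeta$ and $\zeta_\K$, yields the full range $\Re(a)>-1$ directly and serves as a cross-check.
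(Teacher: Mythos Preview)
Your approach is correct and essentially the same as the paper's. The only packaging difference is in how you evaluate the Laplace transform $\int_0^\infty t^{a/2} e^{-yt}\, G_{\K,a/2}(\cdot)\,{\rm d}t$: you do it directly by inserting the Mellin--Barnes integral for $G_{\K,a/2}$, integrating in $t$, and applying Legendre duplication, whereas the paper observes that $e^{-yt}=\sqrt{\tfrac{2yt}{\pi}}\,K_{1/2}(yt)$ and simply invokes Theorem~\ref{Generalized Koshliakov transform} with $\mu=\tfrac12$, $\nu=\tfrac a2$, $x=\tfrac{4\pi^{d+1}n}{D_\K}$. Since Theorem~\ref{Generalized Koshliakov transform} is itself proved by exactly the Parseval/Mellin--Barnes computation you outline, the two routes coincide; the paper's is tidier because it reuses an already established result, while yours is self-contained.

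For the analytic continuation to $\Re(a)>-1$ your sketch is correct in spirit, but ``the $G$-function series converges'' needs justification: you must supply an asymptotic bound on the $G^{\,d+1,1}_{1,2d+1}$ summand as $n\to\infty$ showing uniform convergence on compacta of $\Re(a)>-1$. The paper does this via Lemma~\ref{Big O of G} (whose $m=0$ case gives decay of order $n^{-a/2-2}$) combined with the bound \eqref{Bound for sigma} on $\sigma_{\K,-a}(n)$; you should invoke or reproduce such an estimate to close the argument. Your remark that the apparent poles of $\zeta_\K(1-a)$ and $\Gamma(a+1)\zeta(a+1)$ at $a=0$ cancel is a useful observation the paper leaves implicit.
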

The identity in Theorem \ref{General Lambert series transformation} can be continued analytically in the half plane $\Re(a) > -2m - 3$, where $m$ is any non-negative integer.
\begin{theorem}\label{Analytic continuation}
Let $m$ be a non-negative integer. Then for $\mathrm{Re}(a)>-2m-3$, we have
\begin{align}\label{continationeqn}
\sum_{n=1}^\infty \sigma_{\mathbb{K},a}(n)e^{-ny}&=\frac{1}{y}\zeta_\K(1-a)-\frac{1}{2}\zeta_{\mathbb{K}}(-a)+\frac{\Gamma(a+1)\zeta(a+1)H}{y^{a+1}}+\frac{2^{1+\frac{a}{2}}\pi^{\frac{a+(1-a)d}{2}}D_{\mathbb{K}}^{\frac{a-1}{2}}}{y^{1+\frac{a}{2}}}\nonumber\\
&\times\sum_{n=1}^\infty \sigma_{\mathbb{K},-a}(n)n^{\frac{a}{2}}\Bigg\{\MeijerG*{d+1}{1}{1}{2d+1}{-\frac{a}{4}}{-\frac{a}{4},\left(\frac{a}{4}\right)_{r_1+r_2},\left(\frac{1}{2}+\frac{a}{4}\right)_{r_2};\left(\frac{a}{4}\right)_{r_2},\left(\frac{1}{2}+\frac{a}{4}\right)_{r_1+r_2}}{\frac{4\pi^{2(d+1)}n^2}{y^2D_{\mathbb{K}}^2}}\nonumber\\
&-\frac{(-1)^{r_1}\pi^{\frac{d}{2}}2^{-(a+2)d}}{\sin\left(\frac{\pi a}{2} \right)^{r_1 +r_2}\cos\left(\frac{\pi a}{2} \right)^{r_2}} \left(\frac{2\pi^{d+1}n}{yD_\K} \right)^{-\frac{a}{2}-2} \sum_{k=0}^m\frac{(-1)^k}{\Gamma(-1-a-2k)} \left( \frac{(2\pi)^{d+1}ne^{-\frac{i\pi d}{2}}}{yD_\K} \right)^{-2k} \Bigg\}\nonumber\\
&+\frac{(-1)^{r_1}y(2\pi)^{-(a+2)d-1} \pi^{d-1}}{\sin\left(\frac{\pi a}{2} \right)^{r_1 +r_2}\cos\left(\frac{\pi a}{2} \right)^{r_2}}  \sum_{k=0}^m\frac{(-1)^k\zeta(2k+2)\zeta_{\mathbb{K}}(2k+a+2)}{\Gamma(-1-a-2k)} \left( \frac{(2\pi)^{d+1} e^{-\frac{i\pi d}{2}}}{yD_\K} \right)^{-2k}.
\end{align}
\end{theorem}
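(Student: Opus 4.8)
The plan is to treat Theorem \ref{General Lambert series transformation} as the base case and to enlarge its half-plane of validity by stripping off, term by term, the slowly decaying part of the Meijer $G$-function in \eqref{Eqn:General Lambert series transformation}. The crucial structural remark is that, for fixed $y$ with $\Re(y)>0$, the left-hand side $\sum_{n\ge1}\sigma_{\K,a}(n)e^{-ny}$ is an entire function of $a$: each $\sigma_{\K,a}(n)$ is entire in $a$ and grows only polynomially in $n$, so the factor $e^{-n\Re(y)}$ forces local uniform convergence. Hence it is enough to exhibit, on the larger region $\Re(a)>-2m-3$, a form of the right-hand side that is analytic there and that agrees with \eqref{Eqn:General Lambert series transformation} on the overlap $\Re(a)>-1$; the identity theorem then propagates the equality to the whole region.

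First I would record the algebraic large-argument expansion of the Meijer $G$-function in \eqref{Eqn:General Lambert series transformation}. Writing its argument as $z_n=4\pi^{2(d+1)}n^2/(y^2D_\K^2)$ and using the Mellin--Barnes representation, the expansion as $z_n\to\infty$ is governed by the poles of the single numerator factor $\Gamma(1+\tfrac a4+s)$, situated at $s=-1-\tfrac a4-k$ for $k\ge0$. Summing the residues there and simplifying the resulting products of Gamma functions by the reflection and duplication formulas yields $\sum_{k\ge0}c_k\,z_n^{-1-a/4-k}$, whose coefficients $c_k$ are precisely the quantities appearing in the braces of \eqref{continationeqn}; the multiplicities $r_1,r_2$ in the parameter lists are what produce the factor $\sin(\pi a/2)^{-(r_1+r_2)}\cos(\pi a/2)^{-r_2}$, the sign $(-1)^{r_1}$, the single Gamma $\Gamma(-1-a-2k)^{-1}$, and the phase $e^{-i\pi d/2}$. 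I would then truncate after $k=0,\dots,m$, writing $G(z_n)=\sum_{k=0}^{m}c_k z_n^{-1-a/4-k}+R_m(z_n)$ with the standard remainder bound $R_m(z_n)=O\!\big(z_n^{-1-a/4-(m+1)}\big)$.

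Substituting this splitting into \eqref{Eqn:General Lambert series transformation} separates the $n$-sum into two pieces. Since $z_n\asymp n^2$ and $\sigma_{\K,-a}(n)=O(n^{\max(0,-\Re(a))+\varepsilon})$, the remainder piece has general term $O(n^{\max(0,-\Re(a))-2m-4+\varepsilon})$ and therefore converges exactly for $\Re(a)>-2m-3$, defining an analytic function there; this is precisely the bracketed $n$-sum $C\sum_n\sigma_{\K,-a}(n)n^{a/2}\{G(z_n)-\cdots\}$ of \eqref{continationeqn}, where $C=2^{1+a/2}\pi^{(a+(1-a)d)/2}D_\K^{(a-1)/2}y^{-1-a/2}$. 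The subtracted pieces resum in closed form: because $z_n^{-1-a/4-k}$ contributes a factor $n^{-a/2-2-2k}$, each one produces, after multiplication by $n^{a/2}$ and summation against $\sigma_{\K,-a}(n)$, the Dirichlet series $\sum_n\sigma_{\K,-a}(n)n^{-2-2k}=\zeta(2k+2)\,\zeta_\K(2k+2+a)$, which is the origin of the final line of \eqref{continationeqn}. Matching the accumulated constants in $C\sum_k c_k(\cdots)$ against the stated $c_k$ is the bookkeeping that fixes the exact shape of those terms.

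The main obstacle is the first step: extracting the large-argument expansion of a $G^{d+1,1}_{1,2d+1}$ with repeated parameters and collapsing the residues into the compact closed form stated, while tracking the trigonometric factors, the phase $e^{-i\pi d/2}$, and the lone $\Gamma(-1-a-2k)$ (which surfaces only after Gamma functions are paired by the duplication formula). A reassuring internal check is analytic consistency: $\zeta_\K(2k+2+a)$ has its pole at $a=-1-2k$, exactly where $\Gamma(-1-a-2k)^{-1}$ vanishes, so the apparent singularities of the new terms cancel, in accordance with the left-hand side being entire; more generally every apparent pole of the right-hand side must be removable for the same reason. With analyticity on $\Re(a)>-2m-3$ and agreement on $\Re(a)>-1$ established, the identity theorem upgrades the equality to the full region, completing the proof.
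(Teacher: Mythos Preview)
Your proposal is correct and follows essentially the same approach as the paper: subtract the leading asymptotic terms of the Meijer $G$-function from the summand, show the resulting ``regularised'' series converges for $\Re(a)>-2m-3$, resum the subtracted terms via $\sum_n\sigma_{\K,-a}(n)n^{-2-2k}=\zeta(2k+2)\zeta_\K(2k+2+a)$, and invoke analytic continuation. The only difference is cosmetic: the paper packages the large-argument expansion as a separate lemma (quoting a general asymptotic formula for $G^{m,r}_{p,q}$ from Luke), whereas you sketch how to extract it directly from the Mellin--Barnes integral by shifting past the poles of $\Gamma(1+\tfrac a4+s)$.
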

One can obtain \cite[Theorem 2.5]{DKK} for $\K= \Q$ and  \cite[Theorem 1.7]{BK} for $\K$ any imaginary quadratic field as a special case of the above theorem by mainly applying Proposition \ref{Slater}.

The paper is organised as follows. In \S \ref{Prelim}, we collect some results which are essential to prove our results. We prove Theorem \ref{zetazetathm} in \S \ref{Voro}. The section \S \ref{sp} is devoted to obtaining the special cases of our kernel over any number field. In \S \ref{Kosh}, we generalize the first Koshliakov transform over any number field in Theorem \ref{Generalized Koshliakov transform}. Finally, we establish the transformation formulas Theorem \ref{General Lambert series transformation} and Theorem \ref{Analytic continuation} for $\sigma_{\K, a}(n)$ in \S \ref{Trans}.

\section{Preliminaries}\label{Prelim}
In this section, we collect some basic ingredients, which we use throughout the paper. 

\subsection{Schwartz function}
A function is said to be a Schwartz function if all of its derivatives exist and decay faster than any polynomial. We denote the space of Schwartz functions on $\R$ by $\mathscr{S}(\R)$. For $f \in \mathscr{S}(\R)$, we let the Mellin transform of $f$ be $\mathcal{M}(f)$ i.e,
\begin{equation*}
\mathcal{M}(f)(s) = \int_0^\infty f(x)x^{s-1} dx.
\end{equation*}
The Mellin transform of any Schwartz function satisfies suitable analytic behaviour which has been given in \cite[Lemma 2.1]{BK}. For the sake of completeness we state here in the following lemma.
\begin{lemma}\label{Analyticity of Schwartz function}
The function $F(s)$ is absolutely convergent for $\Re(s) > 0$. It can be analytically continued to the whole complex plane except for simple poles at every non-positive integers. It also satisfies the functional equation:
\begin{equation*}\label{F functional equation}
\mathcal{M}(f')(s+1) = -s \, \mathcal{M}(f)(s), 
\end{equation*}
\end{lemma}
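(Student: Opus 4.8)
The plan is to read $F(s)$ as the Mellin transform $\mathcal{M}(f)(s)=\int_0^\infty f(x)x^{s-1}\,\mathrm{d}x$ and to establish the three assertions in turn. For the absolute convergence I would split the integral at $x=1$. On $(0,1]$ the Schwartz function $f$ is bounded, say $|f(x)|\le C$, so $|f(x)x^{s-1}|\le C\,x^{\Re(s)-1}$, which is integrable there precisely when $\Re(s)>0$. On $[1,\infty)$ the rapid decay of $f$ gives, for every $N$, a bound $|f(x)|\le C_N x^{-N}$, whence $|f(x)x^{s-1}|\le C_N x^{\Re(s)-1-N}$ is integrable as soon as $N$ is chosen larger than $\Re(s)$. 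Combining the two pieces yields absolute convergence throughout the half-plane $\Re(s)>0$.

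Next I would derive the functional equation by integrating by parts:
\begin{equation*}
\mathcal{M}(f')(s+1)=\int_0^\infty f'(x)x^s\,\mathrm{d}x=\Big[f(x)x^s\Big]_0^\infty-s\int_0^\infty f(x)x^{s-1}\,\mathrm{d}x.
\end{equation*}
The boundary term vanishes at infinity because $f$ decays faster than any power, and it vanishes at $0$ because $x^s\to0$ for $\Re(s)>0$ while $f$ stays bounded; hence $\mathcal{M}(f')(s+1)=-s\,\mathcal{M}(f)(s)$ on $\Re(s)>0$, and by uniqueness of analytic continuation it persists wherever both sides are defined.

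Finally, the analytic continuation is obtained by iterating the functional equation. Rewriting it as $\mathcal{M}(f)(s)=-\tfrac{1}{s}\mathcal{M}(f')(s+1)$ and noting that every derivative $f^{(n)}$ is again a Schwartz function, a straightforward induction gives
\begin{equation*}
\mathcal{M}(f)(s)=\frac{(-1)^n}{s(s+1)\cdots(s+n-1)}\,\mathcal{M}\big(f^{(n)}\big)(s+n).
\end{equation*}
Since $\mathcal{M}(f^{(n)})(s+n)$ is holomorphic on $\Re(s)>-n$ by the first part, the right-hand side furnishes a meromorphic continuation of $\mathcal{M}(f)$ to that half-plane whose only singularities are simple poles at $s=0,-1,\dots,-(n-1)$, coming from the explicit denominator; letting $n\to\infty$ covers the whole plane and places the poles exactly at the non-positive integers. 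The main thing to watch is the bookkeeping of the receding strips of convergence and the check that each new factor $(s+k)$ introduces only a simple pole (with residue $f^{(k)}(0)/k!$), so that no higher-order singularities can arise; the remaining estimates are routine.
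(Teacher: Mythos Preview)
Your argument is correct and is the standard way to prove this lemma. Note, however, that the paper does not actually supply a proof of this statement: it merely quotes the lemma ``for the sake of completeness'' and cites \cite[Lemma~2.1]{BK} for the details. So there is no proof in the paper to compare against; your write-up would serve as a self-contained substitute for that citation. One minor remark: strictly speaking the poles at the non-positive integers are \emph{at most} simple, since the residue $f^{(k)}(0)/k!$ may vanish for some $k$; the paper's phrasing shares this small imprecision.
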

\subsection{Gamma function}
The Mellin transform of the Schwartz function $e^{-x}$ is known as Gamma function which can be defined for $\Re(s)>0$ via the convergent improper integral as 
\begin{equation}\label{gammadefn}
\Gamma(s) = \int_0^\infty e^{-x} x^{s-1} {\rm d}x.
\end{equation}
The analytic properties and functional equation of the $\Gamma$-function are given in the following proposition which follows immediately from the previous Lemma.
\begin{proposition}{\cite[Appendix A]{Ayoub}}
The integral in \eqref{gammadefn} is absolutely convergent for $\Re(s) > 0$. It can be analytically continued to the whole complex plane except for the simple poles at every non-positive integers. It also satisfies the functional equation:
\begin{equation*}\label{Gamma functional equation}
\Gamma(s+1) = s\Gamma(s).
\end{equation*}
\end{proposition}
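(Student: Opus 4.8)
The plan is to recognise the $\Gamma$-function as the Mellin transform of the Schwartz function $e^{-x}$ and then to invoke Lemma \ref{Analyticity of Schwartz function}, which already encodes all the analytic information we need; indeed this is exactly why the proposition is asserted to follow immediately from the preceding lemma. First I would observe that $f(x) = e^{-x}$ is a Schwartz function (it is smooth and, together with all its derivatives, decays faster than any polynomial) and that, by the definition \eqref{gammadefn}, one has $\mathcal{M}(f)(s) = \int_0^\infty e^{-x} x^{s-1}\,{\rm d}x = \Gamma(s)$. Lemma \ref{Analyticity of Schwartz function} then yields at once the absolute convergence of the defining integral for $\Re(s) > 0$, together with the meromorphic continuation to all of $\C$ whose only singularities are simple poles at the non-positive integers. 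This settles the first two assertions without any further computation.

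For the functional equation I would feed $f(x) = e^{-x}$ into the functional equation supplied by the same lemma, namely $\mathcal{M}(f')(s+1) = -s\,\mathcal{M}(f)(s)$. Since $f'(x) = -e^{-x} = -f(x)$, the left-hand side equals $-\mathcal{M}(f)(s+1) = -\Gamma(s+1)$, while the right-hand side is $-s\,\Gamma(s)$; cancelling the common sign gives $\Gamma(s+1) = s\Gamma(s)$, first for $\Re(s) > 0$ and then for all $s$ by analytic continuation.

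I do not anticipate any serious obstacle, since the substantive work has been front-loaded into Lemma \ref{Analyticity of Schwartz function}. If a self-contained argument were preferred, the functional equation could instead be obtained by integrating $\Gamma(s+1) = \int_0^\infty e^{-x} x^{s}\,{\rm d}x$ by parts for $\Re(s) > 0$, the boundary term $[-x^{s} e^{-x}]_0^\infty$ vanishing there; the meromorphic continuation and its pole structure would then follow by iterating $\Gamma(s) = \Gamma(s+n)/\bigl(s(s+1)\cdots(s+n-1)\bigr)$, which exhibits a simple pole at $s = -k$ with residue $(-1)^k/k!$ for each non-negative integer $k$. The only point warranting a line of care in that alternative route is verifying that the boundary contributions in the integration by parts genuinely vanish, which is precisely where the hypothesis $\Re(s) > 0$ enters.
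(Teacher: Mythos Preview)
Your proposal is correct and mirrors the paper's own approach exactly: the paper states that the proposition ``follows immediately from the previous Lemma,'' and you have spelled out precisely how, by taking $f(x)=e^{-x}$ in Lemma~\ref{Analyticity of Schwartz function} and reading off both the analytic continuation and the functional equation $\Gamma(s+1)=s\Gamma(s)$. The additional self-contained route via integration by parts is a nice bonus but not needed here.
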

The $\Gamma$-function satisfies many important properties. Here we mention two of them.
\begin{itemize}
\item[(i)]
 Euler's reflection formula : 
 \begin{equation}\label{Reflection formula}
 \Gamma(s)\Gamma(1-s) = \frac{\pi}{\sin \pi s}
 \end{equation}
 where $s \notin \mathbb{Z}$.
 \item[(ii)]
  Legendre's duplication formula :
  \begin{equation}\label{Duplication formula}
  \Gamma(s)\Gamma \left(s+\frac{1}{2}\right) = 2^{1-2s} \sqrt{\pi} \Gamma(2s).
  \end{equation}
\end{itemize}
Proofs of these properties can be found in \cite[Appendix A]{Ayoub}.

\subsection{Dedekind zeta function}\label{Dedekind}
Let $\K$ be any number field with extension degree $[\K : \Q] = d$ and signature $(r_1, r_2)$ (i. e., $d = r_1+2r_2$) and $D_\K$ denotes the absolute value of the discriminant of $\K$. Let $\mathcal{O}_\K$ be its ring of integers and $\mathfrak{N}$ be the norm map of $\K$ over $\Q$. Then the \begin{it}Dedekind zeta function\end{it} attached to number field $\K$ is defined by 
$$
\zeta_\K(s)=\sum_{\mathfrak{a}\subset\mathcal{O}_\K}\frac{1}{\mathfrak{N}(\mathfrak{a})^s}=\prod_{\mathfrak{p}\subset \mathcal{O}_\K}\bigg(1-\frac{1}{\mathfrak{N}(\mathfrak{p})^s}\bigg)^{-1},
$$
for all $s \in \C$ with $\mathfrak{R} (s)>1$, where $\mathfrak{a}$ and $\mathfrak{p}$ run over the non-zero integral ideals and prime ideals of $\mathcal{O}_\K$ respectively.
If $v_\K(m)$ denotes the number of non-zero integral ideals in $\mathcal{O}_\K$ with norm $m$, then $\zeta_\K$ can also be expressed as 
$$
\zeta_\K(s)=\sum_{m=1}^\infty \frac{v_\K(m)}{m^s}.
$$
The following proposition provides the analytic behaviour and the functional equation satisfied by the Dedekind zeta function.
\begin{proposition}\label{Analyticity of Dedekind zeta}
The function $\zeta_\K(s)$ is absolutely convergent for $\mathfrak{R}(s) > 1$. It can be analytically continued to the whole complex plane except for a simple pole at $s = 1$. It also satisfies the functional equation
\begin{equation}\label{functional equation}
\zeta_\K(s) = D_K^{\frac{1}{2}-s}2^{ds-r_2}\pi^{ds-r_1-r_2}\frac{\Gamma(1-s)^{r_1+r_2}}{\Gamma(s)^{r_2}}\sin\left(\frac{\pi s}{2}\right)^{r_1}\zeta_\K(1-s).
\end{equation}
\end{proposition}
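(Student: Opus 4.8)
The plan is to follow Hecke's classical theta-function method: deduce a symmetric functional equation for a gamma-completed zeta function and then convert it into the stated asymmetric form \eqref{functional equation} via the gamma identities already recorded in \eqref{Reflection formula} and \eqref{Duplication formula}. Absolute convergence for $\Re(s)>1$ is the easy part: since each rational prime $p$ factors into at most $d$ prime ideals of $\mathcal{O}_\K$, the Euler product is dominated termwise by $\zeta(\Re(s))^{d}$, so $\zeta_\K(s)$ converges absolutely and is holomorphic on $\Re(s)>1$; equivalently, the bound $\sum_{m\le x}v_\K(m)=O(x)$ forces the Dirichlet series $\sum_m v_\K(m)m^{-s}$ to converge there.

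For the continuation and functional equation I would introduce the completed function
\begin{equation*}
\Lambda_\K(s):=D_\K^{s/2}\Big(\pi^{-s/2}\Gamma\big(\tfrac{s}{2}\big)\Big)^{r_1}\Big((2\pi)^{-s}\Gamma(s)\Big)^{r_2}\zeta_\K(s),
\end{equation*}
and prove $\Lambda_\K(s)=\Lambda_\K(1-s)$. The key steps are: (i) split $\zeta_\K(s)$ into a finite sum of partial zeta functions indexed by the ideal classes of $\K$; (ii) for each class, use the Minkowski embedding $\K\hookrightarrow\R^{r_1}\times\C^{r_2}\cong\R^{d}$ to realize the integral ideals in the class as lattice points, and express the gamma-completed partial sum as an integral of a theta series over a fundamental domain for the action of the units; (iii) apply the theta inversion formula coming from Poisson summation on this lattice, which exchanges $t\leftrightarrow 1/t$ and relates the theta series of an ideal to that of its dual, the discriminant $D_\K$ entering as the covolume. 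Splitting the $t$-integral at $t=1$ and using inversion on the small-$t$ piece yields a meromorphic expression symmetric under $s\mapsto 1-s$, valid on all of $\C$; the only divergences arise from the zero-lattice-vector (constant) terms of the two theta series near the endpoints, producing simple poles of $\Lambda_\K(s)$ at $s=1$ and $s=0$. After dividing out the archimedean $\Gamma$-factors to recover $\zeta_\K(s)$, the potential pole at $s=0$ is absorbed by the pole of $\Gamma(\tfrac{s}{2})^{r_1}\Gamma(s)^{r_2}$ that one divides away, so $\zeta_\K(s)$ retains exactly one simple pole, at $s=1$.

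Finally, I would extract the shape \eqref{functional equation} from $\Lambda_\K(s)=\Lambda_\K(1-s)$ purely formally. Solving for $\zeta_\K(s)$ gives $D_\K^{1/2-s}$ times a quotient of gamma factors, namely $\big(\Gamma(\tfrac{1-s}{2})/\Gamma(\tfrac{s}{2})\big)^{r_1}\big(\Gamma(1-s)/\Gamma(s)\big)^{r_2}$, together with explicit powers of $2$ and $\pi$. Applying the duplication formula \eqref{Duplication formula} to the half-integer gamma arguments and the reflection formula \eqref{Reflection formula} to convert the surviving $\Gamma(\tfrac{1-s}{2})/\Gamma(\tfrac{s}{2})$ quotients into the factor $\sin(\tfrac{\pi s}{2})^{r_1}$ collapses everything into
\begin{equation*}
D_\K^{1/2-s}\,2^{ds-r_2}\,\pi^{ds-r_1-r_2}\,\frac{\Gamma(1-s)^{r_1+r_2}}{\Gamma(s)^{r_2}}\,\sin\Big(\tfrac{\pi s}{2}\Big)^{r_1},
\end{equation*}
which is exactly the prefactor in \eqref{functional equation}; matching the final exponents of $2$ and $\pi$ against $ds-r_2$ and $ds-r_1-r_2$ is straightforward bookkeeping, the missing half-powers of $\pi$ being supplied precisely by the $\sqrt{\pi}$ in \eqref{Duplication formula}.

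The genuinely hard step is (ii)--(iii): setting up the theta-function machinery for a general number field. One must integrate over a fundamental domain for the rank $r_1+r_2-1$ unit group (Dirichlet's unit theorem) --- this is where the regulator and class number enter the residue at $s=1$ --- and establish the correct theta inversion via Poisson summation for the lattice $\mathcal{O}_\K$, carefully tracking the discriminant factor. By contrast, the conversion in the last paragraph is routine manipulation of the $\Gamma$-function using only the identities \eqref{Reflection formula} and \eqref{Duplication formula} already available in the excerpt.
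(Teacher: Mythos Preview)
Your outline is correct and is precisely the classical Hecke theta-function proof; the gamma bookkeeping you describe does collapse to the prefactor in \eqref{functional equation} exactly as claimed. Note, however, that the paper does not prove this proposition at all: its entire ``proof'' is a one-line citation to Lang's \emph{Algebraic Number Theory} \cite[pp.~254--255]{Lang}. So there is nothing to compare against beyond observing that Lang's treatment is itself the Hecke method you have sketched, and your outline is a faithful summary of that standard argument.
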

\begin{proof}
For example, one can find a proof of this statement in \cite[pp. 254-255]{Lang}.
\end{proof}


\subsection{Special function}\label{sec:specialfunctions}
The mathematical functions which have more or less established names and notations due to their importance in mathematical analysis, functional analysis, geometry, physics, or other applications are known as special functions. These mainly appear as solutions of differential equations or integrals of elementary functions. 

One of the most important families of special functions are the Bessel functions.
The Bessel functions of the first kind and the second kind of order $\nu$ are defined by \cite[p.~40, 64]{watson-1944a}
\begin{align}\label{Bessel function}
	J_{\nu}(z)&:=\sum_{m=0}^{\infty}\frac{(-1)^m(z/2)^{2m+\nu}}{m!\Gamma(m+1+\nu)} \hspace{9mm} (z,\nu\in\mathbb{C}),\nonumber\\
	Y_{\nu}(z)&:=\frac{J_{\nu}(z)\cos(\pi \nu)-J_{-\nu}(z)}{\sin{\pi \nu}}\hspace{5mm}(z\in\mathbb{C}, \nu\notin\mathbb{Z}),
	\end{align}
	along with $Y_n(z)=\lim_{\nu\to n}Y_\nu(z)$ for $n\in\mathbb{Z}$. 
The modified Bessel functions of the first and second kinds are defined by \cite[p.~77, 78]{watson-1944a}
\begin{align}
I_{\nu}(z)&:=
\begin{cases}
e^{-\frac{1}{2}\pi\nu i}J_{\nu}(e^{\frac{1}{2}\pi i}z), & \text{if $-\pi<\arg(z)\leq\frac{\pi}{2}$,}\nonumber\\
e^{\frac{3}{2}\pi\nu i}J_{\nu}(e^{-\frac{3}{2}\pi i}z), & \text{if $\frac{\pi}{2}<\arg(z)\leq \pi$,}
\end{cases}\nonumber\\
K_{\nu}(z)&:=\frac{\pi}{2}\frac{I_{-\nu}(z)-I_{\nu}(z)}{\sin\nu\pi}\label{kbesse}
\end{align}
respectively. When $\nu\in\mathbb{Z}$, $K_{\nu}(z)$ is interpreted as a limit of the right-hand side of \eqref{kbesse}. 

The generalized hypergeometric function is defined by the following power series :
\begin{equation*}
\pFq{p}{q}{a_1, a_2, \cdots, a_p}{b_1,b_2, \cdots, b_q}{z}:=\sum_{n=0}^{\infty}\frac{(a_1)_n(a_2)_n\cdots(a_p)_n}{(b_1)_n(b_2)_n\cdots(b_q)_n}\frac{z^n}{n!}
\end{equation*}
where $(a)_n$ denotes the Pochhammer symbol defined by $(a)_n:=a(a+1)\cdots(a+n-1)=\G(a+n)/\G(a)$.
It is well-known \cite[p.~62, Theorem 2.1.1]{AAR} that the above series converges absolutely for all $z$ if $p\leq q$ and for $|z|<1$ if $p=q+1$, and it diverges for all $z\neq0$ if $p>q+1$ and the series does not terminate.

The G-function was introduced initially by Meijer as a very general function using a series. Later, it was defined more generally via a line integral in the complex plane (cf. \cite{erd1}) given by 
\begin{equation}\label{G-function}
\begin{aligned}
G^{m, \ n}_{p, \ q}\bigg(\begin{matrix}
a_1, \ldots, a_p \\
b_1, \ldots, b_q
\end{matrix} \ \bigg|\ z\bigg)=\frac{1}{2\pi i}\underset{{(C)}}{\bigints} \frac{\prod\limits_{j=1}^m\Gamma(b_j-s)\prod\limits_{j=1}^n\Gamma(1-a_j+s)}{\prod\limits_{j=m+1}^q\Gamma(1-b_j+s)\prod\limits_{j=n+1}^p\Gamma(a_j-s)}z^s \rm{d}s ,
\end{aligned}
\end{equation}
where $z \neq 0$ and $m$, $n$, $p$, $q$ are integers which satisfy $0 \leq m \leq q$ and $0 \leq n \leq p$. The poles of the integrand must be all simple. Here $(C)$ in the integral denotes the vertical line from $C-i\infty$ to $C+i\infty$  such that all poles of $\Gamma(b_j-s)$ for $ j=1, \ldots, m$, must lie on one side of the vertical line while all poles of $\Gamma(1-a_j+s)$ for $ j=1, \ldots, n$ must lie on the other side. The integral then converges for $|\arg z| < \delta \pi$ where
$$\delta = m+n - \frac{1}{2}(p+q).$$
The integral additionally converges for $|\arg z|= \delta \pi$ if $(q-p)(\Re(s) + 1/2) > \Re (v) + 1$, where 
$$
v = \sum_{j=1}^{q}b_j - \sum_{j=1}^{p}a_j.
$$
Special cases of the $G$-function include many other special functions. 
The following proposition \cite[Formula 6.5.1, p. 230]{Luke} provides the relation between the $G$-function and the generalized hypergeometric function.
\begin{proposition}\label{Slater}
For $p < q$ or $p=q$ with $|z|<1$, we have
\begin{align*}
G^{m, \ n}_{p, \ q}\bigg(\begin{matrix}
 {\bf a}_p \\
{\bf b}_q
\end{matrix} \ \bigg|\ z\bigg)= \sum_{h=1}^m \frac{\prod_{j=1}^m \Gamma(b_j - b_h)^* \prod_{j=1}^n \Gamma(1+ b_h- a_j) z^{b_h}}{\prod_{j=m+1}^q \Gamma(1+ b_h - b_j) \prod_{j=n+1}^p \Gamma(a_j - b_h)} \pFq{p}{q-1}{1+b_h-{\bf a}_p}{(1+b_h-{\bf b}_q)^*}{z}
\end{align*}
where $ {\bf a}_p = (a_1, \ldots ,a_p) ,  {\bf b}_q = (b_1, \ldots ,b_q) $ and for some fixed $h$, $(b_j - b_h)^* = (b_1 - b_h, \ldots ,b_{h-1}-b_h, b_{h+1}-b_h, \ldots ,b_q-b_h)$.
\end{proposition}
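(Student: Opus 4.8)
The plan is to evaluate the Mellin--Barnes integral \eqref{G-function} defining $G^{m,n}_{p,q}$ by the residue theorem, closing the vertical contour $(C)$ to the right so as to enclose exactly the poles of the factors $\Gamma(b_j-s)$, $j=1,\ldots,m$. Writing the integrand as $\Phi(s)z^s$ with
\begin{equation*}
\Phi(s)=\frac{\prod_{j=1}^m\Gamma(b_j-s)\prod_{j=1}^n\Gamma(1-a_j+s)}{\prod_{j=m+1}^q\Gamma(1-b_j+s)\prod_{j=n+1}^p\Gamma(a_j-s)},
\end{equation*}
the only poles lying to the right of $(C)$ are the points $s=b_h+k$ for $h=1,\ldots,m$ and $k=0,1,2,\ldots$, produced by $\Gamma(b_h-s)$. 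First I would treat the generic case in which $b_j-b_h\notin\Z$ for $j\ne h$, so that all these poles are simple; the general statement then follows by analytic continuation in the parameters $b_1,\ldots,b_m$, both sides being meromorphic in them.

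Next I would justify the contour shift. Completing $(C)$ by a large rightward arc of radius $R_N\to\infty$ chosen to pass between consecutive poles, Stirling's formula for $\Gamma$ shows that $|\Phi(s)z^s|$ on the arc is controlled by a fixed power of $|s|$ times $|z|^{\Re(s)}$ and an exponential factor whose rate is governed by the excess $q-p$. Precisely in the two hypothesised regimes---$p<q$, or $p=q$ together with $|z|<1$---the arc contribution tends to $0$, so the residue theorem gives $G^{m,n}_{p,q}=-\sum_{h,k}\operatorname{Res}_{s=b_h+k}\bigl(\Phi(s)z^s\bigr)$, the minus sign reflecting the clockwise orientation of a contour closed to the right.

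The heart of the argument is the evaluation of the residue at $s=b_h+k$ and the reassembly of the resulting double sum into hypergeometric series. Using $\operatorname{Res}_{s=b_h+k}\Gamma(b_h-s)=(-1)^{k+1}/k!$ and substituting $s=b_h+k$ into the remaining Gamma factors and into $z^s=z^{b_h}z^k$, I would convert each $k$-dependence into Pochhammer symbols. The factors whose argument increases with $k$, namely $\Gamma(1-a_j+b_h+k)$ ($j\le n$) and $\Gamma(1-b_j+b_h+k)$ ($j>m$), are handled directly by $\Gamma(w+k)=(w)_k\Gamma(w)$; the factors whose argument decreases with $k$, namely $\Gamma(b_j-b_h-k)$ ($j\le m,\,j\ne h$) and $\Gamma(a_j-b_h-k)$ ($j>n$), are handled by the identity $\Gamma(w-k)=(-1)^k\Gamma(w)/(1-w)_k$, obtained by iterating the recurrence $\Gamma(w+1)=w\Gamma(w)$. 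Collecting the constant ($k=0$) Gamma factors reproduces the prefactor $z^{b_h}\,\dfrac{\prod_{j\ne h}\Gamma(b_j-b_h)\,\prod_{j=1}^n\Gamma(1+b_h-a_j)}{\prod_{j=m+1}^q\Gamma(1+b_h-b_j)\,\prod_{j=n+1}^p\Gamma(a_j-b_h)}$, while the Pochhammer symbols organise into the top list $1+b_h-{\bf a}_p$ and the starred bottom list $(1+b_h-{\bf b}_q)^*$ of the $_pF_{q-1}$, with $z^k/k!$ supplying the summation variable. Summing over $h=1,\ldots,m$ then yields the stated identity.

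The step I expect to be the main obstacle is precisely this sign-and-Pochhammer bookkeeping: the residue contributes $(-1)^{k+1}$, the global orientation contributes a further $(-1)$, and each reflected Gamma factor contributes an additional $(-1)^k$, so one must check that all these signs combine correctly into the argument carried by the $_pF_{q-1}$ (the very minus signs visible, for instance, in the explicit series of Theorem \ref{Special case of new kernel}). A secondary point to verify is convergence of the resulting series, which is guaranteed by the hypotheses: when $p<q$ the series is a $_pF_{q-1}$ with $p\le q-1$ numerator parameters and hence entire, while when $p=q$ it is a $_qF_{q-1}$, convergent for $|z|<1$, in agreement with the convergence criterion recorded after the definition of the generalized hypergeometric function.
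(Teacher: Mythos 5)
Your route---closing the Barnes contour to the right and summing the residues of the $\Gamma(b_j-s)$ factors---is the classical proof of Slater's expansion; the paper itself gives no proof at all, simply citing Luke [Formula 6.5.1, p.~230], so yours is essentially the argument behind that citation, and the residue evaluation, the Pochhammer identities $\Gamma(w+k)=(w)_k\Gamma(w)$ and $\Gamma(w-k)=(-1)^k\Gamma(w)/(1-w)_k$, the genericity-plus-continuation reduction, and the two convergence regimes ($p<q$ everywhere, $p=q$ for $|z|<1$) are all correctly placed.

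However, the sign bookkeeping that you yourself flag as the main obstacle does not come out to the stated identity, and this is a genuine gap. Per residue at $s=b_h+k$ you pick up one factor $(-1)^k$ from orientation-times-residue, $(m-1)$ factors $(-1)^k$ from the reflected numerator Gammas $\Gamma(b_j-b_h-k)$, and $(p-n)$ factors $(-1)^k$ from the reflected denominator Gammas $\Gamma(a_j-b_h-k)$, for a total of $\bigl((-1)^{\,p-m-n}\bigr)^k$ (since $m+p-n\equiv p-m-n \pmod 2$). Hence your computation produces ${}_pF_{q-1}\bigl(1+b_h-\mathbf{a}_p;(1+b_h-\mathbf{b}_q)^*;(-1)^{p-m-n}z\bigr)$, which is Luke's actual formula---\emph{not} the proposition as printed, which omits the factor $(-1)^{p-m-n}$ and is false whenever $p-m-n$ is odd: for instance $G^{1,0}_{0,1}\bigl(\begin{smallmatrix}-\\0\end{smallmatrix}\big|z\bigr)=e^{-z}$, whereas the unsigned formula would give $e^{z}$. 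The paper implicitly uses the signed version in its applications: in Theorem \ref{Special case of new kernel}(2), where $m=d+1=3$ and $n=p=0$, the ${}_0F_5$'s carry argument $-x^2t^2/16$ although the $G$-function's argument is $+x^2t^2/16$, while in part (1), where $m=2$, the sign disappears. So what you have is a correct proof of the corrected statement; the closing claim that the residue sum ``yields the stated identity'' is the one step that fails, and you should either carry the factor $(-1)^{p-m-n}$ into the conclusion or restrict to $p\equiv m+n \pmod 2$. (Two minor points: the simple-pole genericity you assume is already built into the paper's definition of the $G$-function, and when some $b_j-b_h\in\Z$ with $j,h\le m$ the right-hand side itself degenerates, so continuation in the parameters can only recover the identity in the generic configuration anyway.)
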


\section{Vorono\"{\dotlessi} summation formula for $\sigma_{\K,a}(n)$}\label{Voro}
In this section, we prove the Vorono\"{\dotlessi} type identity for $\sigma_{\K,a}(n)$, where $\K$ is an arbitrary number field. Throughout the paper, let $\int_{(c)}$ denotes the line integral $\int_{c-i\infty}^{c+i\infty}$.
\subsection{Proof of Theorem \ref{zetazetathm}}
We begin by showing that the series on the right-hand side of \eqref{zetazetathmeqn} converges for $-\frac{1}{2}<\Re(a)<\frac{1}{2}$ and for that we need the bounds for $\sigma_{\K, -a}(n)$ and of our kernel. The result in \cite[Corollary 7.119, p. 430]{Bordelles} implies that 
$\sigma_{\K, -a}(n) \leq \sum_{d \mid n} \sigma_0(d) d^{-a},$ where $\sigma_0(d)$ denotes the divisor function $d(n)$. Employing the elementary bound of divisor function we can bound $\sigma_{\K, -a}(n)$ as
\begin{align}\label{Bound for sigma}
\sigma_{\K, -a}(n) \ll \begin{cases}
n^\epsilon & \text{ for } \Re(a)>0\\
n^{\epsilon - \Re(a)} & \text{ for } \Re(a)<0
\end{cases}
\end{align}
where $\epsilon>0$ is arbitrarily small. We next establish the bound for the kernel. It follows from \cite[Equation (4), p. 191]{Luke} that for  $\sigma=q-p$, $\nu=q-m$, $\theta=\frac{1}{\sigma}\left\{\frac{1}{2}(1-\sigma)+\sum_{h=1}^qb_h\right\}$ and $A_{q}^{m, 0}=(-1)^\nu(2\pi i)^{-\nu}\exp\left(- i\pi\sum_{j=m+1}^qb_j\right)$, as $z\to\infty$,
\begin{align*}
\MeijerG*{m}{0}{p}{q}{a_1,\cdots,a_m}{b_1,\cdots,b_q}{z}\sim A_{q}^{m,0}H_{p,q}\left(ze^{i\pi(q-m)}\right)+\overline{A}_{q}^{m,0}H_{p,q}\left(ze^{-i\pi(q-m)}\right),
\end{align*}
where  (cf. \cite[Equation (13), p. 180]{Luke})
\begin{align*}
H_{p,q}(z):=\frac{(2\pi)^{\frac{\sigma-1}{2}}}{\sqrt{\sigma}}e^{-\sigma z^{1/\sigma}}z^\theta+O\left(z^{\theta-1/\sigma}e^{-\sigma z^{1/\sigma}}\right).
\end{align*}
Here $\overline{A}_{q}^{m,0}$ denotes $A_{q}^{m,0}$ with $i$ replaced by $-i$.
Invoking the above formula for $m=d+1,\ p=0$ and $\ q=2(d+1)$, it follows from the definition of Meijer G-function \eqref{G-function} that as $t\to\infty$,
\begin{align}\label{bound for kernel}
G_{\mathbb{K},\frac{a}{2}}\left(\frac{4\pi^{d+1}nt}{D_{\mathbb{K}}}\right)
&=\frac{(-1)^{d+1}(nt)^\frac{ad-a-d}{2(d+1)}}{\sqrt{2\pi(2d+2)}}\left\{\exp\left(\frac{i\pi(-d-1)}{2}+\frac{i\pi(ad-a-d)}{4}-2i(d+1)(nt)^{\frac{1}{(d+1)}}
\right.\right.\nonumber\\
&-\left.\left.\frac{i\pi}{4}(2(1+r_1+r_2)+a(d-1))\right)
+\exp\left(\frac{-i\pi(-d-1)}{2}-\frac{i\pi(ad-a-d)}{4}\right.\right.
\nonumber\\
&+\left.\left.2i(d+1)(nt)^{\frac{1}{(d+1)}}+\frac{i\pi}{4}(2(1+r_1+r_2)+a(d-1))\right)\right\}+O\left((n^2t^2)^{\frac{ad-a-d}{4(d+1)}-\frac{1}{2(d+1)}}\right)\nonumber\\
&=\frac{2(-1)^{d+1}(nt)^\frac{ad-a-d}{2(d+1)}}{\sqrt{2\pi(2d+2)}}\cos\left(-\frac{\pi}{2}(d+1)+\frac{\pi}{4}(ad-a-d)-2(d+1)(nt)^{\frac{1}{(d+1)}}\right.
\nonumber\\
&-\left.\frac{\pi}{4}(2(1+r_1+r_2)+a(d-1))\right)+O\left((n^2t^2)^{\frac{ad-a-d}{4(d+1)}-\frac{1}{2(d+1)}}\right)\nonumber\\
&=\frac{2(-1)^{d+1}(nt)^\frac{ad-a-d}{2(d+1)}}{\sqrt{2\pi(2d+2)}}\left\{A_{a,d}\cos\left(2(d+1)(nt)^{\frac{1}{(d+1)}}\right)+B_{a,d}\sin\left(2(d+1)(nt)^{\frac{1}{(d+1)}}\right)\right\}\nonumber\\
&+O\left((n^2t^2)^{\frac{ad-a-d}{4(d+1)}-\frac{1}{2(d+1)}}\right),
\end{align}
where in the last step $A_{a,d}$ and $B_{a,d}$ denote the constants depending only on $a$ and $d$. 

The integral on the right hand side of \eqref{zetazetathmeqn} inside the sum can be split into two different parts as
\begin{align*}
\int_0^\infty t^{\frac{a}{2}}G_{\mathbb{K},\frac{a}{2}}\left(\frac{4\pi^{d+1}nt}{D_{\mathbb{K}}}\right) f(t)\ dt&=\left(\int_0^M+\int_M^\infty\right)t^{\frac{a}{2}}G_{\mathbb{K},\frac{a}{2}}\left(\frac{4\pi^{d+1}nt}{D_{\mathbb{K}}}\right) f(t)\ dt
\end{align*}
for sufficiently large $M$. We denote the first and the second integral of the right hand side of the above equation by $I_1(n,a)$ and $I_2(n,a)$ respectively. We first show the convergence of $\sum_{n=1}^\infty\sigma_{\mathbb{K},-a}(n)n^{\frac{a}{2}}I_2(n,a)$. For $f$ being a Schwatrz function, we have $f(t) \ll t^{-\alpha}$ for $\alpha$ large enough. Thus the bound \eqref{bound for kernel} implies
\begin{align*}
I_2(n,a)\ll n^{\frac{ad-a-d}{2(d+1)}}\int_M^\infty t^{\frac{\mathrm{Re}(a)}{2}+\mathrm{Re}\left(\frac{ad-a-d}{2(d+1)}\right)-\alpha}\left\{A_{a,d}\cos\left(2(d+1)(nt)^{\frac{1}{(d+1)}}\right)+B_{a,d}\sin\left(2(d+1)(nt)^{\frac{1}{(d+1)}}\right)\right\}dt.
\end{align*}
Here the two integrals involving sine and cosine function behave similarly. Therefore, it is enough to prove the convergence of the integral involving cosine function. The change of variable $t^{\frac{1}{d+1}}=x$ yields
\begin{align*}
\int_M^\infty t^{\frac{\mathrm{Re}(a)}{2}+\mathrm{Re}\left(\frac{ad-a-d}{2(d+1)}\right)-\alpha}\cos\left(2(d+1)(nt)^{\frac{1}{(d+1)}}\right)dt&=(d+1)\int_{M^{\frac{1}{d+1}}}^\infty x^{\frac{2ad+d}{2}-(d+1)\alpha}\cos\left(2(d+1)n^{\frac{1}{(d+1)}}x\right)dt.
\end{align*} 
Performing the integration by parts on the integral on the right-hand side of the above equation, we obtain
\begin{align*}
&\int_M^\infty t^{\frac{\mathrm{Re}(a)}{2}+\mathrm{Re}\left(\frac{ad-a-d}{2(d+1)}\right)-\alpha}\cos\left(2(d+1)(nt)^{\frac{1}{(d+1)}}\right)dt\nonumber\\
&=(d+1)\left\{\frac{1}{n^{\frac{1}{d+1}}}\frac{(M^{1/(d+1)})^{\frac{2ad+d}{2}-(d+1)\alpha}\sin\left(2(d+1)(nM)^{\frac{1}{(d+1)}}\right)}{2(d+1)}\right.\nonumber\\
&\hspace{1.7cm}\left.-\frac{1}{n^{\frac{1}{d+1}}}\left(\frac{2ad+d}{2}-(d+1)\alpha\right)\int_0^\infty x^{\frac{2ad+d-2}{2}-(d+1)\alpha}\sin\left(2(d+1)n^{\frac{1}{(d+1)}}x\right)dt\right\}\nonumber\\
&\ll\frac{\sin\left(2(d+1)(nM)^{\frac{1}{(d+1)}}\right)}{n^{\frac{1}{d+1}}},
\end{align*}
as $n\to\infty$.
We now apply the above bounds to conclude
\begin{align*}
\sum_{n=1}^\infty\sigma_{\mathbb{K},-a}(n)n^{\frac{a}{2}}I_2(n,a)&\ll\sum_{n=1}^\infty \sigma_{\mathbb{K},-a}(n)n^{\frac{\mathrm{Re}(a)}{2}+\mathrm{Re}\left(\frac{ad-a-d}{2(d+1)}\right)-\frac{1}{d+1}}\sin\left(2(d+1)(nM)^{\frac{1}{(d+1)}}\right).
\end{align*}
It follows from the bound \eqref{Bound for sigma} of $\sigma_{\K, -a}(n)$ that
\begin{align*}
\sum_{n=1}^\infty\sigma_{\mathbb{K},-a}(n)n^{\frac{a}{2}}I_2(n,a)&\ll\sum_{n=1}^\infty \frac{\sin\left(2(d+1)(nM)^{\frac{1}{(d+1)}}\right)}{n^{\frac{1}{d+1}-\frac{\mathrm{Re}(a)}{2}-\mathrm{Re}\left(\frac{ad-a-d}{2(d+1)}\right)-\epsilon}} \hspace{2cm} \text{ for } \Re(a)\geq 0,
\end{align*}
and
\begin{align*}
\sum_{n=1}^\infty\sigma_{\mathbb{K},-a}(n)n^{\frac{a}{2}}I_2(n,a)&\ll\sum_{n=1}^\infty \frac{\sin\left(2(d+1)(nM)^{\frac{1}{(d+1)}}\right)}{n^{\frac{1}{d+1}+\frac{\mathrm{Re}(a)}{2}-\mathrm{Re}\left(\frac{ad-a-d}{2(d+1)}\right)-\epsilon}} \hspace{2cm} \text{ for } \Re(a)< 0.
\end{align*}
The Dirichlet's test for the convergence of the series implies that the series $\sum_{n=1}^\infty\sigma_{\mathbb{K},-a}(n)n^{\frac{a}{2}}I_2(n,a)$ is convergent for $-1-\frac{d}{2}<\mathrm{Re}(a)<\frac{1}{2}$. Therefore, this series is convergent for $-\frac{1}{2}<\mathrm{Re}(a)<\frac{1}{2}$.

The similar argument as above also shows that the series $\sum_{n=1}^\infty\sigma_{\mathbb{K},-a}(n)n^{\frac{a}{2}}I_1(n,a)$ is convergent for $-\frac{1}{2}<\mathrm{Re}(a)<\frac{1}{2}$ as the bound in \eqref{bound for kernel} is also valid for $n\to\infty$. This proves the fact that the series in \eqref{zetazetathmeqn} is convergent for $-\frac{1}{2}<\mathrm{Re}(a)<\frac{1}{2}$.

For $f \in \mathscr{S}(\R)$ and $ \Re(s) := c > \max (1, 1+\Re(a))$, the inverse Mellin transform of $F$ yields 
\begin{equation}\label{Eqn:Voronoi sum}
I_{\K, a} = \sum_{n=1}^{\infty} \sigma_{\K, a}(n) f(n) = \sum_{n=1}^{\infty} \sigma_{\K, a}(n) \frac{1}{2\pi i} \int_{(c)} F(s) n^{-s} {\rm d}s = \frac{1}{2\pi i}\int_{(c)}F(s)\zeta(s)\zeta_{\K}(s-a) {\rm d}s, 
\end{equation}
where in the last step, we have written the Dirichlet series associated to the divisor function $\sigma_{\K, a}(n)$ as
\begin{align}\label{Dirichlet series of zeta zetak}
\sum_{n=1}^\infty \frac{\sigma_{\K, a}(n)}{n^s} = \zeta(s)\zeta_{\K}(s-a) \hspace{3cm}  (\Re(s)>1 \text{ and } \Re(s-a)>1).
\end{align}
We next consider the contour $\mathscr{C}$ given by the rectangle with vertices $\{c - iT,c + iT, \lambda + iT, \lambda - iT\}$ in the anticlockwise direction for sufficiently large $T$ where $-1<\lambda<0$. It follows from Lemma \ref{Analyticity of Schwartz function}, the analytic behaviour of $\zeta(s)$ and Proposition \ref{Analyticity of Dedekind zeta} that the integrand is analytic inside the contour except for the possible simple poles at $s = 0, 1$ and $1+a$. Invoking the Cauchy residue theorem, we have
\begin{equation}\label{Eqn:Cauchy theorem}
\frac{1}{2\pi i}\int_{\mathscr{C}}F(s)\zeta(s)\zeta_{\K}(s-a) {\rm d}s = \mathcal{R}_0 + \mathcal{R}_1 + \mathcal{R}_{1+a}
\end{equation}
where  $\mathcal{R}_{z_0}$ denotes the residue of the integrand at $z_0$. We next evaluate the values of $\mathcal{R}_0$, $\mathcal{R}_1 $ and $\mathcal{R}_{1+a}$ using Lemma \ref{Analyticity of Schwartz function} and Proposition \ref{Analyticity of Dedekind zeta}, which are given by
$$\mathcal{R}_0 = \lim_{s \to 0} s F(s)\zeta(s)\zeta_{\K}(s-a) = \frac{1}{2} \mathcal{M}(f')(1) \zeta_{\K}(-a) = \frac{\zeta_{\K}(-a)}{2} \int_0^\infty f'(t) \, {\rm d}t =- \frac{\zeta_{K}(-a) f(0^+)}{2},$$
$$\mathcal{R}_1 = \lim_{s \to 1} (s-1) F(s)\zeta(s)\zeta_{\K}(s-a) = F(1)\zeta_\K(1-a) = \zeta_\K(1-a) \int_0^\infty f(t) \, {\rm d}t$$
and
$$\mathcal{R}_{1+a} = \lim_{s \to 1+a} (s-1-a) F(s)\zeta(s)\zeta_{\K}(s-a) = F(1+a)\zeta(1+a)H= H \zeta(1+a) \int_0^\infty f(t) t^a \, {\rm d}t$$
Substituting the values of $\mathcal{R}_0$, $\mathcal{R}_1 $ and $\mathcal{R}_{1+a}$ in \eqref{Eqn:Cauchy theorem}, the equations \eqref{Eqn:Voronoi sum} and \eqref{Eqn:Cauchy theorem} together yield
\begin{equation}\label{Hori and Vert Int}
I_{\K, a} = \int_0^\infty \left(\zeta_\K(1-a)+ t^a \frac{2\pi h \zeta(1+a)}{w\sqrt{D_\K}} \right)  f(t) \, {\rm d}t -\frac{1}{2} \zeta_\K(-a)f(0^+) + \mathcal{H}_1 + \mathcal{H}_2 + \mathcal{V}
\end{equation}
where $\mathcal{H}_1 := \lim\limits_{T \to \infty} \frac{1}{2\pi i}\int_{\lambda + iT}^{c+iT} F(s)\zeta(s)\zeta_{\K}(s-a) \, {\rm d}s$ and $\mathcal{H}_2 := \lim\limits_{T \to \infty} \frac{1}{2\pi i}\int_{c - iT}^{\lambda - iT} F(s)\zeta(s)\zeta_{\K}(s-a) \, {\rm d}s$ are the horizontal integrals and $\mathcal{V} := \frac{1}{2\pi i}\int_{(\lambda)} F(s)\zeta(s)\zeta_{\K}(s-a) \, {\rm d}s$ is the vertical integral.

It follows from a standard argument of the Phragmen-Lindel{\"o}f principle [cf. \cite[Chapter 5]{Iwaniec}] and the functional equation of both zeta functions that for $s = \sigma + it$ with $\lambda < \sigma < c$ and for some $\theta \in \R$,
$$ |\zeta(\sigma+it)\zeta_{\K}(\sigma+it)| \ll t^{\theta (1-\sigma)},\quad \mathrm{as}\ t\to\infty.$$
On the other hand according to our hypothesis,  $F(s)$ decays faster than any polynomial in $t$ in the above vertical strip. Thus, the horizontal integrals $\mathcal{H}_1$ and $\mathcal{H}_2$ vanish. We next concentrate in evaluating the vertical integral $\mathcal{V}$. 
It follows from Proposition \ref{Analyticity of Dedekind zeta} that
\begin{align}\label{functprod}
\zeta(s)\zeta_{\K}(s-a)&=D_\K^{\frac{1}{2}-s+a}2^{(d+1)s-da-r_2}\pi^{(d+1)s-da-r_1-r_2-1}\frac{\Gamma(1-s)\Gamma(1-s+a)^{r_1+r_2}}{\Gamma(s-a)^{r_2}}\sin\left(\frac{\pi s}{2}\right)\nonumber\\
&\qquad\times\sin\left(\frac{\pi (s-a)}{2}\right)^{r_1}\zeta(1-s)\zeta_\K(1-s+a).
\end{align}
Therefore \eqref{functprod} reduces the vertical integral as
\begin{align*}
\mathcal{V}&=D_\K^{a+\frac{1}{2}}2^{-da-r_2}\pi^{-da-r_1-r_2-1}\frac{1}{2\pi i}\int_{(\lambda)}F(s)\frac{\Gamma(1-s)\Gamma(1-s+a)^{r_1+r_2}}{\Gamma(s-a)^{r_2}}\sin\left(\frac{\pi s}{2}\right)\sin\left(\frac{\pi (s-a)}{2}\right)^{r_1}\nonumber\\
&\qquad\qquad\qquad\qquad\qquad\qquad\qquad\qquad\times\zeta(1-s)\zeta_\K(1-s+a)\left(\frac{(2\pi)^{d+1}}{D_\K}\right)^s\ ds\nonumber\\
&=D_\K^{a-\frac{1}{2}}2^{d(1-a)-r_2+1}\pi^{d(1-a)-r_1-r_2}\frac{1}{2\pi i}\int_{(1-\lambda)}\frac{F(1-s)\Gamma(s)\Gamma(s+a)^{r_1+r_2}}{\Gamma(1-s-a)^{r_2}}\sin\left(\frac{\pi }{2}(1-s)\right)\nonumber\\
&\qquad\qquad\qquad\qquad\qquad\times\sin\left(\frac{\pi }{2}(1-s-a)\right)^{r_1}\zeta(s)\zeta_\K(s+a)\left(\frac{(2\pi)^{d+1}}{D_\K}\right)^{-s}\ ds,
\end{align*}
where in the last step we change the variable $s$ by $1-s$ in the integral. Now substituting $s$ by $s-a$ and letting $\lambda^* = 1-\lambda+\Re(a)$ in the above equation, our vertical integral $\mathcal{V}$ becomes
\begin{multline}\label{z1}
\mathcal{V}
=\frac{2(2\pi)^{a+d-r_2}}{\pi^{r_1}\sqrt{D_\K}}\frac{1}{2\pi i}\int_{(\lambda^*)}\frac{F(1+a-s)\Gamma(s-a)\Gamma(s)^{r_1+r_2}}{\Gamma(1-s)^{r_2}}\cos\left(\frac{\pi }{2}(s-a)\right)
\cos\left(\frac{\pi s}{2}\right)^{r_1}\\
\times\zeta(s-a)\zeta_\K(s)\left(\frac{(2\pi)^{d+1}}{D_\K}\right)^{-s}\ ds.
\end{multline}
For Re$(s)>1$ and Re$(s-a)>1$, it follows that
\begin{align*}
\zeta(s-a)\zeta_\K(s)=\sum_{n=1}^\infty\frac{\sigma_{\K,-a}(n)}{n^{s-a}}.
\end{align*}
Therefore, the integral \eqref{z1} can be written as
\begin{align}\label{Ix1}
\mathcal{V}
&=\frac{2(2\pi)^{a+d-r_2}}{\pi^{r_1}\sqrt{D_\K}} \sum_{n=1}^\infty \frac{\sigma_{\K,-a}(n)}{n^{-a}}I_{\K,a}(n)
\end{align}
where
\begin{align}\label{ikan}
I_{\K,a}(n):=\frac{1}{2\pi i}\int_{(\lambda^*)}F(1+a-s)N_{\K,a}(s)\left(\frac{(2\pi)^{d+1}n}{D_\K}\right)^{-s}\ ds,
\end{align}
and
\begin{align*}
N_{\K,a}(s):=\frac{\Gamma(s-a)\Gamma(s)^{r_1+r_2}}{\Gamma(1-s)^{r_2}}\cos\left(\frac{\pi }{2}(s-a)\right)\cos\left(\frac{\pi s}{2}\right)^{r_1}.
\end{align*}
We apply \eqref{Reflection formula} and \eqref{Duplication formula} together to reduce the above factor $N_{\K,a}(s)$ as
\begin{align}\label{nka2s}
N_{\K,a}(s) &=2^{(1+d)s-a-1-(r_1+r_2)}\pi^{\frac{r_1}{2}+\frac{1}{2}}\frac{\Gamma\left(\frac{s}{2}-\frac{a}{2}\right)\Gamma\left(\frac{s}{2}\right)^{r_1+r_2}\Gamma\left(\frac{s}{2}+\frac{1}{2}\right)^{r_2}}{\Gamma\left(\frac{1}{2}-\frac{s}{2}\right)^{r_1+r_2}\Gamma\left(1-\frac{s}{2}\right)^{r_2}\Gamma\left(\frac{1}{2}+\frac{a}{2}-\frac{s}{2}\right)}.
\end{align}
On the other hand, applying Proposition \ref{Analyticity of Schwartz function} into \eqref{ikan}, $I_{\K,a}(n)$ can be dictated as
\begin{align}\label{beforeintbyparts}
I_{\K,a}(n)&=-\frac{1}{2\pi i}\int_{(\lambda^*)}\int_0^\infty\frac{N_{\K,a}(s)f'(t)t^{1+a-s}}{1+a-s}\left(\frac{(2\pi)^{d+1}n}{D_\K}\right)^{-s}\ dt ds\nonumber\\
&=-\frac{1}{n^{1+a}}\int_0^\infty f'(t)\left(\frac{1}{2\pi i}\int_{(\lambda^*)}\frac{N_{\K,a}(s)(nt)^{1+a-s}}{1+a-s}\left(\frac{(2\pi)^{d+1}}{D_\K}\right)^{-s}\ ds\right)\ dt\nonumber\\
&=-\frac{1}{n^{1+a}}\int_0^\infty f'(t)J_{\K,a}(nt)\ dt,
\end{align}
where
\begin{align*}
J_{\K,a}(x):=\frac{1}{2\pi i}\int_{(\lambda^*)}\frac{N_{\K,a}(s)x^{1+a-s}}{1+a-s}\left(\frac{(2\pi)^{d+1}}{D_\K}\right)^{-s}\ ds.
\end{align*}
Thus the integration by parts on the integral \eqref{beforeintbyparts} provides
\begin{align}\label{afterintbyparts}
I_{\K,a}(n)&=\frac{1}{n^{a+1}}\int_0^\infty f(t)\frac{d}{dt}\left( J_{\K,a}(nt)\right)\ dt.
\end{align}
Differentiating $J_{\K,a}(nt)$ with respect to $t$, we get
\begin{align}\label{beforenka2s}
\frac{d}{dt}\left(J_{\K,a}(nt)\right)&=\frac{n^{a+1}t^a}{2\pi i}\int_{(\lambda^*)}N_{\K,a}(s)\left(\frac{(2\pi)^{d+1}nt}{D_\K}\right)^{-s}\ ds.
\end{align}
We next insert the factor $N_{\K, a}(s)$ from \eqref{nka2s} and replace $s$ by $\frac{a}{2}-2s$ into \eqref{beforenka2s} to deduce
\begin{align}\label{ddny}
\frac{d}{dt}\left(J_{\K,a}(nt)\right) &=\frac{n^{\frac{a}{2}+1}(tD_\K)^{\frac{a}{2}}\pi^{\frac{r_1+1- (d+1)a}{2}}}{2^{a+r_1+r_2}} \frac{1}{2\pi i}\int_{\left(-\frac{\lambda^*}{2} + \frac{a}{4}\right)}\frac{\Gamma\left(-\frac{a}{4}-s\right)\Gamma\left(\frac{a}{4}-s \right)^{r_1+r_2}\Gamma\left(\frac{1}{2} +\frac{a}{4} -s \right)^{r_2}}{\Gamma\left(\frac{1}{2} - \frac{a}{4}+s \right)^{r_1+r_2}\Gamma(1 - \frac{a}{4}+s)^{r_2}\Gamma\left(\frac{1}{2}+\frac{a}{4}+s\right)}\nonumber\\
 &\hspace{10.5cm} \times \left(\frac{\pi^{2(d+1)}n^2t^2}{D_\K^2}\right)^{s}\ ds\nonumber\\
&=\frac{n^{\frac{a}{2}+1}(tD_\K)^{\frac{a}{2}}\pi^{\frac{r_1+1- (d+1)a}{2}}}{2^{a+r_1+r_2}} G_{\K, a/2}\left(\frac{4\pi^{d+1}nt}{D_\K} \right),
\end{align}
where in the last step, we apply the definitions of Meijer G-function \eqref{G-function} and the kernel \eqref{Our Kernel}. Invoking \eqref{ddny} into \eqref{afterintbyparts} and inserting the resulting expression into \eqref{Ix1}, we evaluate the vertical integral as
\begin{align}\label{bsfinal}
\mathcal{V} = 2 \pi^{ \frac{1+a +(1-a)d}{2}}D_{\K}^{\frac{a-1}{2}}\sum_{n=1}^\infty \sigma_{\K,-a}(n)n^{a/2} \int_0^\infty t^{a/2}G_{\K, a/2}\left(\frac{4\pi^{(d+1)}nt}{D_\K}\right)f(t)\ {\rm d}t.
\end{align}
We finally substitute \eqref{bsfinal} into \eqref{Hori and Vert Int} to arrive at \eqref{zetazetathmeqn} which concludes our theorem.

\section{Special cases of the generalized kernel}\label{sp}
In this section, we study the special cases of our generalized kernel. For instance, we have shown here that for $\K = \Q$ i.e, for the extension degree $d=1$, our kernel reduces to the first Koshliakov kernel \eqref{First Koshliakov kernel}. 
\subsection{Proof of Theorem \ref{Special case of new kernel}}
It follows from the definition \eqref{Our Kernel} that for $\K = \Q$, our kernel takes the form
\begin{equation*}
G_{\Q, \nu}(xt) = \MeijerG*{2}{0}{0}{4}{-}{-\frac{\nu}{2},\frac{\nu}{2};\frac{1-\nu}{2},\frac{1+\nu}{2}}{\frac{x^2t^2}{16}}
\end{equation*}
We invoke Proposition \ref{Slater} on the right hand side of the above equation to write the kernel as
\begin{align}\label{g*}
G_{\Q, \nu}(xt) &=\frac{\Gamma(\nu)}{\Gamma\left(\frac{1}{2}\right)\Gamma\left(\frac{1}{2}-\nu\right)}\left(\frac{x^2t^2}{16}\right)^{-\frac{\nu}{2}}\pFq03{-}{1-\nu,\frac{1}{2}-\nu,\frac{1}{2}}{\frac{x^2t^2}{16}}\nonumber\\
&+\frac{\Gamma(-\nu)}{\Gamma\left(\frac{1}{2}\right)\Gamma\left(\frac{1}{2}+\nu\right)}\left(\frac{x^2t^2}{16}\right)^{\frac{\nu}{2}}\pFq03{-}{1+\nu,\frac{1}{2}+\nu,\frac{1}{2}}{\frac{x^2t^2}{16}}.
\end{align}
The series definition of ${}_0F_3$ yields
\begin{align}\label{2of3}
\pFq03{-}{1 \pm \nu,\frac{1}{2}\pm\nu,\frac{1}{2}}{\frac{x^2t^2}{16}}&=\frac{1}{2}\left(\pFq01{-}{1\pm2\nu}{xt}+\pFq01{-}{1\pm2\nu}{-xt}\right).
\end{align}
Inserting \eqref{2of3} into \eqref{g*} and applying \eqref{Duplication formula} and \eqref{Reflection formula} on the Gamma factors,  \eqref{g*} can be simplified as
\begin{align}\label{78}
G_{\Q, \nu}(xt) &=\frac{1}{2\sin(\pi\nu)}\Bigg\{\frac{(xt)^{-\nu}}{\Gamma(1-2\nu)}\pFq01{-}{1-2\nu}{xt}-\frac{(xt)^{\nu}}{\Gamma(1+2\nu)}\pFq01{-}{1+2\nu}{xt}\nonumber\\
&\quad+\frac{(xt)^{-\nu}}{\Gamma(1-2\nu)}\pFq01{-}{1-2\nu}{-xt}-\frac{(xt)^{\nu}}{\Gamma(1+2\nu)}\pFq01{-}{1+2\nu}{-xt}\Bigg\}.
\end{align}
The $K$-Bessel function and the $J$-Bessel function satisfies the following relations with the hypergeometric function which are
\begin{align*}
K_{\nu}(x)=&\frac{\pi}{2\sin(\nu\pi)}\left\{\frac{(x/2)^{-\nu}}{\G(1-\nu)}\pFq01{-}{1-\nu}{\frac{x^2}{4}}-\frac{(x/2)^{\nu}}{\G(1+\nu)}\pFq01{-}{1+\nu}{\frac{x^2}{4}}\right\}.
\end{align*}
and
\begin{align*}
J_\nu(x)=\frac{(z/2)^\nu}{\Gamma(\nu+1)}\pFq01{-}{\nu+1}{-x^2/4}.
\end{align*}
Thus the above relations reduce the kernel $G_{\Q, \nu}(xt)$ in \eqref{78} as
\begin{align}\label{kjg}
G_{\Q, \nu}(xt)=\frac{1}{2\sin(\pi\nu)}\left\{\frac{2}{\pi}\sin(2\pi\nu)K_{2\nu}(2\sqrt{xt})+J_{-2\nu}(2\sqrt{xt})-J_{2\nu}(2\sqrt{xt})\right\}.
\end{align}
The definition of $Y$-Bessel function in \eqref{Bessel function} yields
\begin{align}\label{l1ydefn}
J_{-2\nu}(2\sqrt{xt})=\cos(2\pi\nu)J_{2\nu}(2\sqrt{xt})-\sin(2\pi\nu)Y_{2\nu}(2\sqrt{xt}).
\end{align}
We finally substitute \eqref{l1ydefn} into \eqref{kjg} to conclude the first part of our theorem. For the second part, the result follows directly for $(r_1, r_2) = (0, 1)$, by applying Proposition \ref{Slater} on the corresponding Meijer G-function.

\section{Koshliakov transform over any number field}\label{Kosh}
In this section, we generalize first Koshliakov transform \eqref{Koshliakov transform} over an arbitrary number field.
\subsection{Proof of Theorem \ref{Generalized Koshliakov transform}}
It follows from the Mellin transform of $K$-Bessel function \cite[p. 115, Equation (11.1)]{Ober} that
\begin{align*}
H_1(s):=\int_0^\infty K_\mu(t)t^{\mu+\nu}t^{s-1}\ dt = 2^{\mu+\nu+s-2}\Gamma\left(\frac{s+\nu}{2}\right)\Gamma\left(\frac{2\mu+\nu+s}{2}\right)
\end{align*}
where $\Re(s) > \pm \Re(\mu) - \Re(\mu+\nu)$.
The definition of Meijer G-function \eqref{G-function} yields for $c > \pm \Re(\nu)$
\begin{align*}
G_{\K, \nu}(xt)&=\frac{1}{2\pi i}\int_{(c)}\frac{\Gamma\left(-\frac{\nu}{2}+s\right)\Gamma\left(\frac{\nu}{2}+s\right)^{r_1+r_2}\Gamma\left(\frac{1+\nu}{2}+s\right)^{r_2}}{\Gamma\left(\frac{1+\nu}{2}-s\right)\Gamma\left(1-\frac{\nu}{2}-s\right)^{r_2}\Gamma\left(\frac{1-\nu}{2}-s\right)^{r_1+r_2}}\left(\frac{x^2}{16}\right)^{-s}t^{-2s}\ ds\nonumber\\
&=\frac{1}{4\pi i}\int_{(c/2)}\frac{\Gamma\left(-\frac{\nu}{2}+\frac{s}{2}\right)\Gamma\left(\frac{\nu}{2}+\frac{s}{2}\right)^{r_1+r_2}\Gamma\left(\frac{1+\nu}{2}+\frac{s}{2}\right)^{r_2}}{\Gamma\left(\frac{1+\nu}{2}-\frac{s}{2}\right)\Gamma\left(1-\frac{\nu}{2}-\frac{s}{2}\right)^{r_2}\Gamma\left(\frac{1-\nu}{2}-\frac{s}{2}\right)^{r_1+r_2}}\left(\frac{x^2}{16}\right)^{-\frac{s}{2}}t^{-s}\ ds,
\end{align*}
where in the last step we substitute $s$ by $s/2$. Therefore, the Mellin transform of $G_{\K, \nu}(xt)$ can be written as
\begin{align*}
H_2(s) :=\frac{1}{2}\frac{\Gamma\left(-\frac{\nu}{2}+\frac{s}{2}\right)\Gamma\left(\frac{\nu}{2}+\frac{s}{2}\right)^{r_1+r_2}\Gamma\left(\frac{1+\nu}{2}+\frac{s}{2}\right)^{r_2}}{\Gamma\left(\frac{1+\nu}{2}-\frac{s}{2}\right)\Gamma\left(1-\frac{\nu}{2}-\frac{s}{2}\right)^{r_2}\Gamma\left(\frac{1-\nu}{2}-\frac{s}{2}\right)^{r_1+r_2}}\left(\frac{x^2}{16}\right)^{-\frac{s}{2}}.
\end{align*}
It now follows from the conditions of hypothesis that the Perseval's formula \cite[p. 83]{Paris} evaluates the following integral along the vertical line $(d)$ satisfying $\pm\Re(\nu)<d<\min \lbrace 1+ \Re(\nu), 1+\Re(\nu + 2\mu) \rbrace$ as
\begin{align*}
\int_0^\infty K_\mu(t)t^{\mu+\nu}G_{\K, \nu}(xt) \ dt &=\frac{1}{2\pi i}\int_{(d)}H_1(1-s)H_2(s)\ ds =\frac{2}{2\pi i}\int_{(\frac{d}{2})}H_1(1-2s)H_2(2s)\ ds\nonumber\\
&=\frac{2^{\mu+\nu-1}}{2\pi i}\int_{(\frac{d}{2})}\tfrac{\Gamma\left(\frac{2\mu+\nu+1}{2}-s\right)\Gamma\left(-\frac{\nu}{2}+s\right)\Gamma\left(\frac{\nu}{2}+s\right)^{r_1+r_2}\Gamma\left(\frac{1+\nu}{2}+s\right)^{r_2}x^{-2s}}{\Gamma\left(1-\frac{\nu}{2}-s\right)^{r_2}\Gamma\left(\frac{1-\nu}{2}-s\right)^{r_1+r_2}4^{-s}} ds.
\end{align*}
Substituting $s$ by $-s$ in the above equation, we arrive at
\begin{align*}
\int_0^\infty K_\mu(t)t^{\mu+\nu}G_{\K, \nu}(xt)\ dt =\frac{2^{\mu+\nu-1}}{2\pi i}\int_{(-\frac{d}{2})}\tfrac{\Gamma\left(\frac{2\mu+\nu+1}{2}+s\right)\Gamma\left(-\frac{\nu}{2}-s\right)\Gamma\left(\frac{\nu}{2}-s\right)^{r_1+r_2}\Gamma\left(\frac{1+\nu}{2}-s\right)^{r_2}}{\Gamma\left(1-\frac{\nu}{2}+s\right)^{r_2}\Gamma\left(\frac{1-\nu}{2}+s\right)^{r_1+r_2}}\left(\frac{x^2}{4}\right)^s ds.
\end{align*}
Finally, the definition of Meijer G-function concludes our theorem. In particular, for $\mu = -\nu$, \eqref{Generalized self reciprocal} follows immediately from the above result.

\section{Transformation formula of the Lambert series associated to $\sigma_{\mathbb{K},a}(n)$}\label{Trans}
In this section, we mainly investigate the transformation of the series $\sum_{n=1}^\infty \sigma_{\mathbb{K},a}(n)e^{-ny}$ where $a$ and $y$ are any complex numbers. The following big-oh estimate for the Meijer $G$-function plays a crucial role in proving Theorem \ref{General Lambert series transformation} and Theorem \ref{Analytic continuation}.
\begin{lemma}\label{Big O of G}
Let $a$ and $y$ be any complex numbers. Then as $n\to \infty$, we have
\begin{align}\label{bigohg}
&\MeijerG*{d+1}{1}{1}{2d+1}{-\frac{a}{4}}{-\frac{a}{4},\left(\frac{a}{4}\right)_{r_1+r_2},\left(\frac{1}{2}+\frac{a}{4}\right)_{r_2};\left(\frac{a}{4}\right)_{r_2},\left(\frac{1}{2}+\frac{a}{4}\right)_{r_1+r_2}}{\frac{4\pi^{2(d+1)}n^2}{y^2D_{\mathbb{K}}^2}}\nonumber\\
&=\frac{(-1)^{r_1}\pi^{\frac{d}{2}}2^{-(a+2)d}}{\sin\left(\frac{\pi a}{2} \right)^{r_1 +r_2}\cos\left(\frac{\pi a}{2} \right)^{r_2}} \left(\frac{2\pi^{d+1}n}{yD_\K} \right)^{-\frac{a}{2}-2} \sum_{k=0}^m\frac{(-1)^k}{\Gamma(-1-a-2k)} \left( \frac{(2\pi)^{d+1}ne^{-\frac{i\pi d}{2}}}{yD_\K} \right)^{-2k}+O\left(\frac{1}{n^{2m+\frac{a}{2}+4}}\right).
\end{align}
\end{lemma}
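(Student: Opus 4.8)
The plan is to read off the large-$n$ behaviour of the $G$-function from its Mellin--Barnes representation \eqref{G-function}. Setting $z=4\pi^{2(d+1)}n^2/(y^2D_\K^2)$ and matching the parameters $(m,n,p,q)=(d{+}1,1,1,2d{+}1)$, the left side of \eqref{bigohg} equals
\begin{equation*}
\frac{1}{2\pi i}\int_{(c)}\frac{\Gamma\left(-\frac{a}{4}-s\right)\Gamma\left(\frac{a}{4}-s\right)^{r_1+r_2}\Gamma\left(\frac{1}{2}+\frac{a}{4}-s\right)^{r_2}\Gamma\left(1+\frac{a}{4}+s\right)}{\Gamma\left(1-\frac{a}{4}+s\right)^{r_2}\Gamma\left(\frac{1}{2}-\frac{a}{4}+s\right)^{r_1+r_2}}\,z^{s}\,{\rm d}s ,
\end{equation*}
the contour separating the poles in $+s$ from those in $-s$. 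Since $z\to\infty$, the natural move is to displace the contour to the left: the only poles crossed are the simple poles of $\Gamma(1+\frac{a}{4}+s)$ at $s=-1-\frac a4-k$, $k=0,1,2,\dots$, and these generate exactly the descending powers $z^{-1-a/4-k}$, i.e.\ the powers $n^{-a/2-2-2k}$ appearing in the bracketed sum of \eqref{bigohg}.

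First I would compute the residue at $s=-1-\frac a4-k$. Using $\operatorname{Res}=\frac{(-1)^k}{k!}$ for $\Gamma(1+\frac a4+s)$ and evaluating the remaining factors gives
\begin{equation*}
(-1)^k\,\frac{\Gamma\left(1+\frac{a}{2}+k\right)^{r_1+r_2}\Gamma\left(\frac{3}{2}+\frac{a}{2}+k\right)^{r_2}}{\Gamma\left(-\frac{a}{2}-k\right)^{r_2}\Gamma\left(-\frac{1}{2}-\frac{a}{2}-k\right)^{r_1+r_2}}\,z^{-1-a/4-k}.
\end{equation*}
Collapsing this ratio of $2d$ gamma factors into the closed form of \eqref{bigohg} is then a matter of the two classical identities: I would pair the factors and apply Euler's reflection \eqref{Reflection formula} to each quotient $\Gamma(1+\frac a2+k)/\Gamma(-\frac a2-k)$ and $\Gamma(\frac32+\frac a2+k)/\Gamma(-\frac12-\frac a2-k)$ (this is what produces $\sin(\pi a/2)^{r_1+r_2}$, $\cos(\pi a/2)^{r_2}$, the sign $(-1)^{r_1}$, and the phase recorded as $e^{-i\pi d/2}$ through a factor $(-1)^{kd}$), and then apply Legendre duplication \eqref{Duplication formula} to the paired products $\Gamma(1+\frac a2+k)\Gamma(\frac32+\frac a2+k)$, which is what introduces the factor built from $\Gamma(-1-a-2k)$ and the powers $2^{-(a+2)d}$, $\pi^{d/2}$. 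Using $r_1+2r_2=d$ to tidy the exponents should reproduce the $k$-th bracketed term.

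For the error term I would bound the integral over the displaced line. Having crossed the poles $k=0,\dots,m$, one places the contour just to the right of $s=-2-\frac a4-m$, where $|z^{s}|\ll n^{2(-2-\operatorname{Re}(a)/4-m)}=n^{-2m-\operatorname{Re}(a)/2-4}$; combining this with Stirling's estimate for the gamma quotient along the vertical line---after using \eqref{Reflection formula} once more to move the denominator gammas upstairs so the integrand decays in $\operatorname{Im}(s)$---yields absolute convergence of the displaced integral and the bound $O(n^{-2m-a/2-4})$, which also justifies the termwise residue extraction.

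The hard part will be the asymptotic bookkeeping proper to a $p<q$ Meijer $G$-function. Because $q-p=2d>0$, the full large-$z$ expansion carries, besides the descending algebraic series above, exponential (Stokes-type) contributions---the same mechanism that produced the oscillatory estimate \eqref{bound for kernel} used for Theorem \ref{zetazetathm}---and it is precisely these that account for the phase $e^{-i\pi d/2}$. The delicate point is to show that in the sector $|\arg z|<\pi$, equivalently $\operatorname{Re}(y)>0$, these extra contributions are genuinely dominated by (or reorganised into) the stated algebraic series, so that they are absorbed into the remainder, and to carry the constants through the reflection/duplication simplifications without slippage; a cross-check in the degenerate case $\K=\Q$ (where $d=1$ and the $G$-function reduces via Theorem \ref{Special case of new kernel} to Bessel functions) would confirm the normalisation.
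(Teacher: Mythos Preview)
Your approach is correct and, at the level of underlying ideas, is the same as the paper's. The paper does not carry out the contour shift explicitly; it simply invokes Luke's general asymptotic theorem for $G^{m,r}_{p,q}$ as $|z|\to\infty$ \cite[p.~179, Theorem~2]{Luke} with the parameters $m=d+1$, $r=p=1$, $q=2d+1$, and then simplifies the resulting $E_{p,q}$-series to the displayed closed form via \eqref{Reflection formula} and \eqref{Duplication formula}. But Luke's theorem is itself proved by exactly the mechanism you describe---pushing the Mellin--Barnes line to the left and collecting the residues of the unique factor $\Gamma(1+\tfrac a4+s)$---so your argument is the self-contained version of the paper's one-line citation. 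The residue computation and the reflection/duplication clean-up you outline match what one has to do after applying Luke.

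One correction to your final paragraph: the worry about exponential (Stokes) contributions is misplaced here. The oscillatory behaviour you recall from \eqref{bound for kernel} pertains to $G^{d+1,0}_{0,2(d+1)}$, where there is \emph{no} gamma factor with poles to the left and the large-$z$ asymptotic is purely exponential. In the present lemma the $G$-function is of type $G^{d+1,1}_{1,2d+1}$: the factor $\Gamma(1+\tfrac a4+s)$ furnishes a full ladder of left poles, and your own Stirling count shows the integrand decays like $e^{-(\pi-|\arg z|)|t|}$ on every vertical line, so for $|\arg z|<\pi$ (equivalently $\Re(y)>0$) the displaced integral on $\Re(s)=\sigma'$ is genuinely $O(|z|^{\sigma'})$ with $n$-independent implied constant. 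Any exponentially small pieces are already absorbed in that remainder; no separate Stokes analysis is needed. The phase $e^{-i\pi d/2}$ in the statement is not a Stokes artefact but simply the repackaging of the sign $(-1)^{kd}$ that drops out of the reflection identities applied to the residues, exactly as you surmised earlier in your sketch.
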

\begin{proof}
For $1\leq r \leq p<q,\ 1\leq m\leq q$, and $|\arg(z)|\leq\rho\pi-\delta, \rho>0,\ \delta\geq0$, it follows from \cite[p.~179, Theorem 2]{Luke} that as $|z|\to\infty$,
\begin{align}\label{asyofg}
\MeijerG*{m}{r}{p}{q}{a_1,\cdots,a_p}{b_1,\cdots,b_q}{z}\sim \sum_{j=1}^r\exp(-i\pi(\nu+1)a_j)\Delta_q^{m,r}(j)E_{p,q}\left(z\exp(i\pi(\nu+1)||a_j\right),
\end{align}
where $\nu=q-m-r$,
\begin{align*}
E_{p,q}(z||a_j)&:=\frac{z^{a_j-1}\prod_{\ell=1}^q\Gamma(1+b_\ell-a_j)}{\prod_{\ell=1}^p\Gamma(1+a_\ell-a_j)}\sum_{k=0}^m\frac{\prod_{\ell=1}^q\left(1+b_\ell-a_j\right)_k}{k!\prod_{\substack{\ell=1\\\ell\neq j}}^p(1+a_\ell-a_j)_k}\left(-\frac{1}{z}\right)^k,\nonumber\\
\Delta_q^{m,r}(j)&:=(-1)^{\nu+1}\frac{\prod_{\substack{\ell=1\\ \ell\neq j}}^r\Gamma(a_\ell-a_j)\Gamma(1+a_\ell-a_j)}{\prod_{\ell=m+1}^q\Gamma(a_j-b_\ell)\Gamma(1+b_\ell-a_j)}.
\end{align*}
Thus the lemma follows for $m=d+1,\ r=p=1,\ q=2d+1$ in \eqref{asyofg} and using \eqref{Reflection formula} and \eqref{Duplication formula}.
\end{proof}
\subsection{Proof of Theorem \ref{General Lambert series transformation}}
We first prove the transformation formula for $0 < \Re(a) < 1$ and $y > 0$, later we extend it to $\Re(a) >-1$ and $Re(y) > 0$ respectively by analytic continuation. We consider the particular Schwartz function $f(n) = e^{-ny}$ with $y>0$ in Theorem \ref{zetazetathm}, which yields the following identity
\begin{align}\label{fisexp}
\sum_{n=1}^\infty \sigma_{\mathbb{K},a}(n)e^{-ny}&=-\frac{1}{2}\zeta_{\mathbb{K}}(-a)+\frac{\zeta_{\mathbb{K}}(1-a)}{y}+\frac{H\Gamma(a+1)\zeta(a+1)}{y^{a+1}}\nonumber\\
&\quad+2\pi^{\frac{{1+a+(1-a)d}}{2}}D_{\mathbb{K}}^{\frac{a-1}{2}}\sum_{n=1}^\infty \sigma_{\mathbb{K},-a}(n)n^{\frac{a}{2}}\int_0^\infty t^{\frac{a}{2}}e^{-yt}G_{\mathbb{K},a}\left(\frac{4\pi^{d+1}nt}{D_{\mathbb{K}}}\right)\ dt.
\end{align}
Letting $\mu=1/2,\ \nu=\frac{a}{2}$ and $x=\frac{4\pi^{d+1}n}{D_{\mathbb{K}}}$ in \eqref{Eqn:Generalized Koshliakov transform} and using the fact that $K_{\frac{1}{2}}(t)=\sqrt{\frac{\pi}{2t}}e^{-t}$, the integral on the right-hand side of \eqref{fisexp} evaluates as
\begin{align}\label{valueofint}
\int_0^\infty t^{\frac{a}{2}}e^{-yt}G_{\mathbb{K},a}\left(\frac{4\pi^{d+1}nt}{D_{\mathbb{K}}}\right)\ dt =\sqrt{\frac{2^a}{\pi}}y^{-\frac{a}{2}-1}\MeijerG*{d+1}{1}{1}{2d+1}{-\frac{a}{4}}{-\frac{a}{4},\left(\frac{a}{4}\right)_{r_1+r_2},\left(\frac{1}{2}+\frac{a}{4}\right)_{r_2};\left(\frac{a}{4}\right)_{r_2},\left(\frac{1}{2}+\frac{a}{4}\right)_{r_1+r_2}}{\frac{4\pi^{2(d+1)}n^2}{y^2D_{\mathbb{K}}^2}}.
\end{align}
We next substitute the evaluation \eqref{valueofint} into \eqref{fisexp} to arrive at our theorem for $0 < \Re(a) < 1$ and $y > 0$. It remains to show next that the result is also valid for $\Re(a) > -1$ and $Re(y) > 0$. Thus the bounds on Lemma \ref{Big O of G} and \eqref{Bound for sigma} together implies that the series on the right hand side of \eqref{Eqn:General Lambert series transformation} converges uniformly as long as $\Re(a) > -1$. The summand of the series is analytic for $\Re(a) > -1$, therefore by Weierstrass’ theorem on analytic functions it follows that the series represents an analytic function of $a$ when $\Re(a) > -1$.

On the other hand, the left-hand side of \eqref{Eqn:General Lambert series transformation} is analytic for $\Re(a) > -1$, thus by the principle of analytic continuation, \eqref{Eqn:General Lambert series transformation} holds for $\Re(a) > -1$ and $y > 0$. The both sides of \eqref{Eqn:General Lambert series transformation} are also analytic not only as a function of $y$ but also for $\Re(y) > 0$. Therefore again by applying the principle of analytic continuation, we can conclude our theorem.

\subsection{Proof of Theorem \ref{Analytic continuation}}
We add and subtract the finite sum
\begin{align*}
\frac{(-1)^{r_1}\pi^{\frac{d}{2}}2^{-(a+2)d}}{\sin\left(\frac{\pi a}{2} \right)^{r_1 +r_2}\cos\left(\frac{\pi a}{2} \right)^{r_2}} \left(\frac{2\pi^{d+1}n}{yD_\K} \right)^{-\frac{a}{2}-2} \sum_{k=0}^m\frac{(-1)^k}{\Gamma(-1-a-2k)} \left( \frac{(2\pi)^{d+1}ne^{-\frac{i\pi d}{2}}}{yD_\K} \right)^{-2k}
\end{align*}
from the summand on the right-hand side of \eqref{Eqn:General Lambert series transformation} to rewrite the transformation as
\begin{align}\label{beforefinite}
\sum_{n=1}^\infty \sigma_{\mathbb{K},a}(n)e^{-ny}&=-\frac{1}{2}\zeta_{\mathbb{K}}(-a)+\frac{\zeta_{\mathbb{K}}(1-a)}{y}+\frac{H\Gamma(a+1)\zeta(a+1)}{y^{a+1}}+\frac{2^{1+\frac{a}{2}}\pi^{\frac{a+(1-a)d}{2}}D_{\mathbb{K}}^{\frac{a-1}{2}}}{y^{1+\frac{a}{2}}}\nonumber\\
&\times\sum_{n=1}^\infty \sigma_{\mathbb{K},-a}(n)n^{\frac{a}{2}}\Bigg\{\MeijerG*{d+1}{1}{1}{2d+1}{-\frac{a}{4}}{-\frac{a}{4},\left(\frac{a}{4}\right)_{r_1+r_2},\left(\frac{1}{2}+\frac{a}{4}\right)_{r_2};\left(\frac{a}{4}\right)_{r_2},\left(\frac{1}{2}+\frac{a}{4}\right)_{r_1+r_2}}{\frac{4\pi^{2(d+1)}n^2}{y^2D_{\mathbb{K}}^2}}\nonumber\\
&-\frac{(-1)^{r_1}\pi^{\frac{d}{2}}2^{-(a+2)d}}{\sin\left(\frac{\pi a}{2} \right)^{r_1 +r_2}\cos\left(\frac{\pi a}{2} \right)^{r_2}} \left(\frac{2\pi^{d+1}n}{yD_\K} \right)^{-\frac{a}{2}-2} \sum_{k=0}^m\frac{(-1)^k}{\Gamma(-1-a-2k)} \left( \frac{(2\pi)^{d+1}ne^{-\frac{i\pi d}{2}}}{yD_\K} \right)^{-2k}\Bigg \}\nonumber\\
&+\frac{(-1)^{r_1}y(2\pi)^{-(a+2)d-1} \pi^{d-1}}{\sin\left(\frac{\pi a}{2} \right)^{r_1 +r_2}\cos\left(\frac{\pi a}{2} \right)^{r_2}}  \sum_{k=0}^m\frac{(-1)^k}{\Gamma(-1-a-2k)} \left( \frac{(2\pi)^{d+1} e^{-\frac{i\pi d}{2}}}{yD_\K} \right)^{-2k}\sum_{n=1}^{\infty}\frac{\sigma_{\K,-a}(n)}{n^{2k+2}}.\nonumber
\end{align}
Therefore, \eqref{continationeqn} follows directly for $\Re(a)>-1$ by applying \eqref{Dirichlet series of zeta zetak} in the last term of the above equation.

The bounds of Lemma \ref{Big O of G} and \eqref{Bound for sigma} together implies that the series on the right-hand side is uniformly convergent in $\Re(a)>-2m-3-\epsilon$ for any $\epsilon>0$. The summand of the series is analytic in this region of $a$, therefore by Weiestrass theorem for analytic functions, it follows that the series represents an analytic function of $a$ in  $\Re(a)>-2m-3$. It is straight forward to see that the other terms on the right-hand side of \eqref{continationeqn} are also analytic for $\Re(a)>-2m-3$.

On the other hand, the series on the left-hand side of \eqref{continationeqn} is an analytic function of $a$ for $\Re(a)>-2m-3$. Therefore, by the principle of analytic continuation, we can conclude that \eqref{continationeqn} holds for $\Re(a)>-2m-3$, which completes the proof of Theorem \ref{Analytic continuation}.

\end{document}